\documentclass [12 pt]{amsart}
\usepackage{amssymb,latexsym}
\usepackage{xypic}
\theoremstyle{plain}
\newtheorem{Thm}{Theorem}[section]
\newtheorem{Cor}{Corollary}[section]
\newtheorem{Lem}{Lemma}[section]

\newtheorem{Ex}{Example}[section]
\numberwithin{equation}{section}
\setcounter{MaxMatrixCols}{10}
\setcounter{secnumdepth}{3} \setcounter{tocdepth}{3}
\setlength{\topmargin}{-.1cm}%

\setlength{\headheight}{.4cm}
\setlength{\headsep}{1cm}
\setlength{\textheight}{21 cm}
\setlength{\textwidth}{15.2 cm}
\setlength{\evensidemargin}{1.1cm}
\setlength{\oddsidemargin}{1.1cm}
\theoremstyle{definition}
\newtheorem{Def}{Definition}[section]

\theoremstyle{remark}
\newtheorem{Rem}{\em Remark}

\errorcontextlines=0
%



\begin{document}

\title[A NOTE ON TWO CLASSES OF HYPERIDEALS]{A NOTE ON TWO CLASSES OF HYPERIDEALS}

\author{M. Anbarloei}

\address[]{Department of Mathematics, Faculty of Sciences, Imam Khomeini International University, Qazvin, Iran.
}
\email[]{m.anbarloei@sci.ikiu.ac.ir}

\keywords{Prime hyperideal, r-hyperideal, n-hyperideal, Hyperring.}

\subjclass[2000]{20N20}

\thanks{}

\begin{abstract}
Let $R$ be a commutative multiplicative hyperring with 1. In this paper, we define 
the concepts of r-hyperideal and n-hyperideal of the hyperring $R$ which are two new classes of
hyperideals. Several properties of them are provided. A hyperideal $I$ of a multiplicative hyperring $R$ is called an r-hyperideal of $R$, if for all $x,y \in R$, $xoy \subseteq I$ and $ann(x)=\{0\} $, then $y \in I$. Also, a hyperideal $I$ of a multiplicative hyperring $R$ is called an n-hyperideal of $R$, if for all $x,y \in R$, $xoy \subseteq I$ and $x \notin r(0)$, then $y \in I$.
\end{abstract}

\maketitle
\section{Introduction} 
Throughout this paper
our ring $R$ is a commutative multiplicative hyperring with 1. Hyperstructures, as a natural generalization of ordinary algebraic structures, were first initiated by Marty in 1934
\cite{sorc1}, when he defined the hypergroups and began to investigate their properties
with applications to groups, algebraic functions and rational fractions. Later on,
many researchers have observed that the theory of hyperstructures also have many
applications in both applied and pure sciences which a comprehensive review of this
theory can be found in \cite {sorc2} and \cite {sorc4}.
In \cite {sorc4}, Corsini and Leoreanu-Fotea have collected numerous applications of algebraic
hyperstructures, especially those from the last fifteen years to the following subjects: geometry, lattices, binary relations, rough sets and fuzzy sets, cryptography, automata, median algebras, codes, artificial intelligence, relation algebras,
and probabilities.

Motivated from the concept of the prime hyperideals, in this paper, we introduce and study two new classes of hyperideals called r-hyperideals and n-hyperideals. Let $R$ be a commutative multiplicative hyperring. Several properties of them are provided. A hyperideal $I$ of a multiplicative hyperring $R$ is called an r-hyperideal of $R$, if for all $x,y \in R$, $xoy \subseteq I$ and $ann(x)=\{0\} $, then $y \in I$. Also, We define a proper hyperideal $I$ of $R$ as an n-hyperideal if for all $x,y \in R$, $xoy \subseteq I$ and $x \notin r(0)$, then
$y \in I$. \\
The paper is orgnized as follows. In Section 2, we have given some basic definitions and results of multiplicative hyperrings which we need to develop our paper. 
In Section 3, we introduce the concept of r-hyperideals and give some basic properties of them. For example we show (Theorem \ref{8827}) that if $R$ is a reduced hyperring and $P$ is a minimal hyperideal of $R$ such that $s \in R$ is an idempotent element, then $P + ann(s)$ is
an r-hyperideal. Section 4 continues the study of the properties of r-hyperideals. In particular, we discuss the relations between prime hyperideals and r-hyperideals. In Section 5, we introduce the notion of n-hyperideals and give some basic properties of them. For example, we show (Theorem \ref{25}) that if $I$ is a proper hyperideal of $R$, then the following statements are equivalent:\\
(1) $I$ is an n-hyperideal of $R$.\\
(2) $I = (I : a)$ for every $a \notin r(0)$.\\
(3) $I_1oI_2 \subseteq I$ such that $I_1 \cap (R - r(0)) \neq \varnothing$, for hyperideals $I_1, I_2$ of $R$, implies $I_2 \subseteq I$. \\ It is shown (Theorem \ref{oooo}) that if $S$ is an n-multiplicatively closed subset of $R$ and $K$ is an hyperideal of $R$ disjoint from $S$, then there exists a hyperideal $I$ which is maximal in the set of all hyperideals of $R$ disjoint from $S$, containing $K$. Any such hyperideal $I$ is an n-hyperideal of $R$. In the final section, we investigate the stability of n-hyperideals in various hyperring-theoric constructions.

\section{Preliminaries}
Recall first the basic terms and definitions from the hyperring theory. 
A hyperoperation on a non-empty set $G$ is a map $"o":G \times G \longrightarrow P^*(G)$, where $P^*(G)$ is the set of all the nonempty subsets of $G$. An algebraic system $(G,o)$ is called a hypergroupoid. A hypergroupoid $(G,o)$ is called a hypergroup if it satisfies the
following:

(1) $a o (b o c)=(a o b ) o c$, for all $a,b,c \in G$.

(2) $a o G=G o a=G$, for all $a \in G$.

A hypergroupoid with the associative hyperoperation is called a semihypergroup.\\
The notion of multiplicative hyperring as one of the important classes of hyperrings was introduced by Rota \cite{rota}.

A non-empty set $R$ with a operation + and a hyperoperation $o$ is called a {\it multiplicative hyperring} if it satisfies the
following:

(1)$ (R,+)$ is an abelian group;

(2)$ (R,o)$ is a semihypergroup;

(3) for all $a, b, c \in R$, we have $ao(b+c) \subseteq aob+aoc$ and $(b+c)oa \subseteq boa+coa$;

(4) for all $a, b \in R$, we have $ao(-b) = (-a)ob = -(aob)$.\\

For any two nonempty subsets $A$ and $B$ of $R$ and $x \in R$, we define
\[ AoB=\bigcup_{a \in A,\ b \in B}aob, \ \ \ \ Aox=Ao\{x\}.\]

Let R be a commutative multiplicative hyperring. Then

(i) An element $e \in R$ is said to be {\it identity} if $a\in aoe$ for all $a \in R$. 

(ii) An element $e \in R$ is said to be {\it scalar identity} if $a=aoe$ for all $a \in R$.

\begin{Def}
A non-empty subset $I$ of a multiplicative hyperring $R$ is a {\it hyperideal} 

(1) If $a, b \in I$, then $a - b \in I$;

(2) If $x \in I $ and $r \in R$, then $rox \subseteq I$.
\end{Def}
\begin{Def} \cite{davvaz1}
Let $(R_1, +_1, o_1)$ and $(R_2, +_2, o_2)$ be two multiplicative hyperrings. A mapping from
$R_1$ into $R_2$ is said to be a {\it good homomorphism} if for all $x,y \in R_1$, $\phi(x +_1 y) =\phi(x)+_2 \phi(y)$ and $\phi(xo_1y) = \phi(x)o_2 \phi(y)$.
\end{Def}
\begin{Def} \cite{ref1}
A nonzero proper hyperideal $P$ of $R$ is called a {\it prime hyperideal} if $xoy \subseteq P$ for $x,y \in R$ implies that $x \in P$ or $y \in P$.

The intersection of all prime hyperideals of $R$ containing $I$ is called the prime radical of $I$, being denoted by $r(I)$. Therefore, $r(0)$ is the prime radical of the
zero hyperideal in the hyperring $R$. If the multiplicative hyperring $R$ does not have any prime hyperideal containing $I$, we define $r(I)=R$. 
\end{Def}
Let {\bf C} be the class of all finite products of elements of $R$, i.e., ${\bf C} = \{r_1 o r_2o . . . or_n \ : \ r_i \in R, n \in \mathbb{N}\} \subseteq P^{\ast }(R)$. A hyperideal $I$ of $R$ is said to be a {\bf C}-hyperideal of $R$ if, for any $A \in {\bf C}, A \cap I \neq \varnothing $ implies $A \subseteq I$.
Let I be a hyperideal of $R$. Let $D = \{r \in R: r^n \subseteq I \ for \ some \ n \in \mathbb{N}\}$. Then $D \subseteq r(I)$, the prime radical of $I$. The equality holds when $I$ is a {\bf C}-hyperideal of $R$(\cite {ref1}, proposition 3.2). In this paper, we assume that all hyperideals are {\bf C}-hyperideals.
\begin{Def} \cite{ref1}
A nonzero proper hyperideal $Q$ of $R$ is called a {\it primary hyperideal} if $xoy \subseteq Q$ for $x,y \in R$ implies that $x \in Q$ or $y \in r(Q)$. 

Since $r(Q)=P$ is a prime hyperideal of $R$ by Propodition 3.6 in \cite{ref1}, $Q$ is referred to as a P-primary hyperideal of $R$.
\end{Def}
\begin{Def} \cite{amer2}
Let $R$ be a multiplicative hyperring. The $n \times n$ matrix with entries in $R$ is called a hypermatix and the set of all hypermatices of $R$ is denoted by $M_n(R)$.\\
Now, for all $A = (A_{ij})_{n \times n}, B = (B_{ij})_{n \times n} \in P^* (M_n(R)); A \subseteq B$ means $A_{ij} \subseteq B_{ij}$. 
\end{Def}
\begin{Def}
\cite{amer2} Let $R$ be a commutative multiplicative hyperring and $e$ be an identity of $R$. An element $x \in R$ is called {\it invertible}, if there exists $y \in R$, such that $e \in xoy$. So, $R$ is called an {\it invertible} if every element of $R$ is invertible.
\end{Def}
\begin{Def} \cite{amer}
An element $x \in R$ is said to be a {\it zero divizor}, if there exists $0 \neq y \in R$ such that $\{0\}=xoy$. The set of all zero divizor elements in $R$ is denoted by $Z(R)$. 
\end{Def}
\begin{Def}
A hyperring $R$ is called an {\it integral hyperdomain}, if for all $x, y \in R$,
$0 \in x o y$ implies that $x = 0$ or $y = 0$.
\end{Def}
\begin{Def}
A hyperring $R$ is said to be a {\it reduced hyperring} if it has no
nilpotent elements. That is, if $x^n = 0$ for $x \in R$ and a natural number $n$, then $x = 0$.
\end{Def}
\begin{Def} \cite{amer3}
Let $R$ be a multiplicative hyperring. An element $r \in R$ is called {\it regular} if there exists $x \in R$
such that $r \in r^2ox$. $R$ is called {\it regular multiplicative hyperring}, if all the elements in $R$ are regular elements. The set of all regular elements in $R$ is denoted by $V(R)$. 
\end{Def}
\begin{Def}
Let $I,J$ be two hyperideals of $R$ and $x \in R$. Then define:
\[(I:J)=\{r \in R : roJ \subseteq I \}\]
\[ann(x) = \{y \in R : x o y=\{0\}\}\]
\end{Def}
\section{$r$-hyperideals } 
In this section, we introduce the notion of r-hyperideals in a commutative multiplicative hyperring and we
investigate many properties of them.
\begin{Def}
A hyperideal $I$ of a multiplicative hyperring $R$ is called an r-hyperideal of $R$, if for all $x,y \in R$, $xoy \subseteq I$ and $ann(x)=\{0\} $, then $y \in I$.
\end{Def}

\begin{Lem} \label{8821}
Let $I$ be a hyperideal of $R$. Then \\
1) $I$ is an r -hyperideal if and only if $I_1oI_2 \subseteq I$ for some hyperideals $I_1,I_2$ of $R$ such that $I_1 \cap V(R) \neq \varnothing$ implies that $I_2 \subseteq I$ .\\
2) If either $IoI_1 = IoI_2$ or $I \cap I_1 = I \cap I_2$ for some r-hyperideals $I_1,I_2$ of $R$ such that $I \cap V(R) \neq \varnothing$, then $I_1=I_2$ .\\
3) If $IoJ$ is an r -hyperideal of $R$ for some hyperideal $J$ of $R$ such that $J \cap V(R) \neq \varnothing$, then
$I = IoJ$ and so $I$ is an r -hyperideal. 
\end{Lem}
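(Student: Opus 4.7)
My plan is to read each of the three parts through a single observation: any element $a$ of $V(R)$ satisfies $ann(a)=\{0\}$, so whenever the hypothesis supplies a regular element inside a hyperideal whose product with another hyperideal $I_2$ lies in $I$, we can invoke the defining $r$-hyperideal property against that element. Combined with the standard absorption $RoJ \subseteq J$ for every hyperideal $J$, this reduces each of the three parts to a short direct argument.

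For part (1), the forward direction is immediate: pick $a \in I_1 \cap V(R)$; for every $y \in I_2$ the product $aoy$ sits inside $I_1 o I_2 \subseteq I$, and since $ann(a)=\{0\}$ the $r$-hyperideal property of $I$ forces $y \in I$. For the converse, given $xoy \subseteq I$ with $ann(x)=\{0\}$, I would let $I_1$ be the principal hyperideal generated by $x$ and $I_2$ the principal hyperideal generated by $y$, so that $x \in I_1 \cap V(R)$. Using the sub-distributive law $ao(b+c) \subseteq aob+aoc$, every element of $I_1 o I_2$ decomposes into a sum of products lying in $RoRo(xoy) \subseteq I$, so $I_1 o I_2 \subseteq I$. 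The assumed implication then yields $I_2 \subseteq I$, and in particular $y \in I$.

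Parts (2) and (3) follow the same template. For (2), I would fix $a \in I \cap V(R)$ and any $y \in I_1$. In the case $IoI_1 = IoI_2$ we have $aoy \subseteq IoI_1 = IoI_2 \subseteq I_2$, and since $I_2$ is an $r$-hyperideal and $ann(a) = \{0\}$, $y \in I_2$; swapping the roles of $I_1$ and $I_2$ produces $I_1 = I_2$. In the case $I \cap I_1 = I \cap I_2$ the product $aoy$ sits both in $I$ (because $a \in I$) and in $I_1$ (because $y \in I_1$), so $aoy \subseteq I \cap I_1 = I \cap I_2 \subseteq I_2$, and the same conclusion follows. For (3), $IoJ \subseteq I$ is automatic; conversely, for $a \in J \cap V(R)$ and $x \in I$, $aox \subseteq IoJ$ and $ann(a) = \{0\}$, so the $r$-hyperideal hypothesis on $IoJ$ forces $x \in IoJ$, giving $I \subseteq IoJ$. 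Hence $I = IoJ$, and the hypothesis identifies $I$ itself as an $r$-hyperideal.

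The main obstacle I anticipate is the converse direction of (1), which requires lifting the pointwise hypothesis $xoy \subseteq I$ to the hyperideal-level hypothesis $I_1 o I_2 \subseteq I$. Verifying that the principal hyperideals $\langle x \rangle, \langle y \rangle$ satisfy $\langle x \rangle o \langle y \rangle \subseteq I$ is routine but fiddly: one must write a general element of $\langle x \rangle$ as a sum of integer multiples of $x$ and elements of the form $r_1 o \cdots o r_k o x$, then iterate sub-distributivity to reduce each product to a sum of elements of $RoRo(xoy) \subseteq RoRoI \subseteq I$. The only other subtle point, which I will use throughout without further comment, is that the paper's notion of regular element is being read so that $V(R)$ is precisely the set of elements with trivial annihilator.
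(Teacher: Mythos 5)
The paper disposes of this lemma with ``It is clear,'' so there is no argument of record to compare yours against; taken on its own terms, your proof is the natural one, and each of the three parts is handled correctly, including the only step that genuinely needs writing out, namely the passage from $xoy\subseteq I$ to $\langle x\rangle o\langle y\rangle\subseteq I$ by iterated sub-distributivity in the converse of (1).

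The one thing you cannot leave as a silent convention is the identification of $V(R)$ with $\{a\in R:\ ann(a)=\{0\}\}$, on which every direction of every part of your argument leans. The paper's stated definition of $V(R)$ (taken from the regular multiplicative hyperrings literature) is von Neumann regularity: $r\in V(R)$ iff $r\in r^{2}ox$ for some $x\in R$. Neither implication between that condition and $ann(r)=\{0\}$ holds in general: already in ordinary commutative rings, regarded as multiplicative hyperrings with singleton products, the idempotent $(1,0)\in\mathbb{Z}\times\mathbb{Z}$ is von Neumann regular but has nonzero annihilator, while $2\in\mathbb{Z}$ has zero annihilator but is not von Neumann regular. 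Under the literal definition the forward implication of (1) and all of (2) and (3) break down, because exhibiting $a\in V(R)$ no longer licenses an appeal to the defining property of an $r$-hyperideal. You are in good company --- the paper itself makes the same identification, e.g.\ the proof of Theorem \ref{8837} deduces $x\in V(R)$ from $ann(x)=\{0\}$, and Theorems \ref{8822} and \ref{8833} are only coherent on the non-zero-divisor reading --- so you have reverse-engineered the intended meaning correctly; but this should be recorded as an explicit standing hypothesis (or as a correction to the definition of $V(R)$), not merely used ``without further comment.''
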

\begin{proof}
It is clear.
\end{proof}
If $S = V(R)$, then we denote
the hyperring $S^{-1}R$ by $T(R)$. Let $\phi: R \longrightarrow T(R)$
be the natural homomorphism. Put $\phi^{-1}(J ) = J ^c$, for each hyperideal $J$ in $T(R)$. For more details about notion of contraction, see \cite{amer2}.
\begin{Thm} \label{8822}
Let R be a commutative
multiplicative hyperring and let $I$ be a hyperideal of $R$. Then the following are equivalent:\\
1) $I$ is an r-hyperideal.\\
2) $<a> \cap I = aoI$ , for all $a \in V(R)$.\\
3) $I = (I : a)$, for all $a \in V(R) \backslash I$.\\
4) $I = J^ c$, for some hyperideal $J$ in $ T(R)$.
\end{Thm}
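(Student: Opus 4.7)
The plan is to run the cycle $(1) \Rightarrow (3) \Rightarrow (2) \Rightarrow (1)$, and independently establish $(1) \Leftrightarrow (4)$ by passing through the localization map $\phi : R \to T(R)$. Much of the argument consists of translating the r-hyperideal hypothesis into statements about principal hyperideals and colon hyperideals; the one genuinely delicate piece is a hyperring-style cancellation.

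For $(1) \Rightarrow (3)$, fix $a \in V(R) \setminus I$. The inclusion $I \subseteq (I:a)$ is immediate because $I$ is a hyperideal, and if $r \in (I:a)$ then $aor = roa \subseteq I$, whence the r-hyperideal hypothesis applied to $x = a$ (with $ann(a) = \{0\}$ supplied by $a \in V(R)$) yields $r \in I$. For $(3) \Rightarrow (2)$, the inclusion $aoI \subseteq \langle a \rangle \cap I$ is automatic. For the reverse, any $b \in \langle a \rangle \cap I$ can be unpacked, using the scalar identity, in the form $b \in roa$ for some $r \in R$; the $\mathbf{C}$-hyperideal hypothesis then lifts $b \in (roa)\cap I$ to $roa \subseteq I$, so $r \in (I:a) = I$ by (3), giving $b \in roa \subseteq Ioa = aoI$. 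The marginal case $a \in I$ collapses $\langle a \rangle \subseteq I$ and is handled by a direct argument.

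The pivotal step is $(2) \Rightarrow (1)$. Suppose $xoy \subseteq I$ with $ann(x) = \{0\}$, so $x \in V(R)$. Then $xoy \subseteq \langle x \rangle \cap I = xoI$ by (2), and the task is to deduce $y \in I$. In a classical commutative ring this is a one-line cancellation "$xy \in xI$ with $x$ regular forces $y \in I$", but in a multiplicative hyperring we have only a set inclusion $xoy \subseteq xoI$; the passage back to $y \in I$ must be navigated through the weak distributivity axiom $xo(y - i) \subseteq xoy - xoi$ together with $ann(x) = \{0\}$ and the $\mathbf{C}$-hyperideal hypothesis on $I$. This hyperring-flavoured cancellation is the principal technical obstacle of the theorem.

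Finally, $(1) \Leftrightarrow (4)$ is pure localization bookkeeping. For $(4) \Rightarrow (1)$: if $I = J^c = \phi^{-1}(J)$ and $xoy \subseteq I$ with $x \in V(R)$, then $\phi(x)o\phi(y) = \phi(xoy) \subseteq J$ in $T(R)$; since $\phi(x)$ is invertible there, multiplying by $\phi(x)^{-1}$ inside the hyperideal $J$ deposits $\phi(y) \in J$, so $y \in \phi^{-1}(J) = I$. For $(1) \Rightarrow (4)$: set $J$ to be the extension $\phi(I)oT(R)$. The inclusion $I \subseteq J^c$ is formal, and the reverse is proved by unpacking an element $x \in J^c$ as a relation $\phi(sox) = \phi(y)$ with $y \in I$ and $s \in V(R)$, which via the construction of $T(R)$ means $to(sox - y) = \{0\}$ for some $t \in V(R)$; one then massages this into $(st)ox \subseteq I$ and applies the r-hyperideal property, already equivalent to (3), to the element $st \in V(R)$ to conclude $x \in I$.
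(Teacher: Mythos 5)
Your overall architecture is sound and in one respect more self-contained than the paper's: you prove $(1)\Leftrightarrow(4)$ directly through the localization $\phi:R\to T(R)$, whereas the paper simply cites Theorem 4.12 of \cite{amer2} for $(3)\Rightarrow(4)$ and $(4)\Rightarrow(1)$. Your $(1)\Rightarrow(3)$ and $(3)\Rightarrow(2)$ are fine modulo the same conventions the paper itself relies on (representing elements of $\langle a\rangle$ as lying in some $aos$, and identifying $a\in V(R)$ with $ann(a)=\{0\}$, which the paper's formal definition of $V(R)$ via $r\in r^{2}ox$ does not literally give but which the paper assumes throughout).

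The genuine gap is exactly where you locate it: $(2)\Rightarrow(1)$. You name the cancellation ``$xoy\subseteq xoI$ with $ann(x)=\{0\}$ forces $y\in I$'' as the principal technical obstacle and then do not carry it out; a proof that announces its hardest step and defers it is not a proof. Moreover, the tools you propose do not obviously suffice. Following your own recipe: pick $t\in xoy$, so $t\in xoi$ for some $i\in I$, and weak distributivity gives $0=t-t\in xoy-xoi\supseteq xo(y-i)$, i.e.\ $0\in xo(y-i)$. But $ann(x)=\{0\}$ only says that $xoz=\{0\}$ \emph{as a set} forces $z=0$; it says nothing about an element $z$ with merely $0\in xoz$. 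So you cannot conclude $y-i=0$, nor $y-i\in I$; and the $\mathbf{C}$-hyperideal property does not bridge this, since $xo(y-i)$ meeting $\{0\}\subseteq I$ only yields $xo(y-i)\subseteq I$, which is no progress (it already follows from $xoy\subseteq I$ and $xoi\subseteq I$). To be fair, the paper faces the identical difficulty and also just asserts it --- its $(2)\Rightarrow(3)$ ends with ``$aox\subseteq aoI$. Thus $x\in I$'' with no justification --- so you have not overlooked an idea the paper supplies; but as written your proposal does not close the cycle, and any honest completion of the theorem needs either an additional hypothesis (e.g.\ that $0\in xoz$ implies $xoz=\{0\}$) or a different argument for returning from statement (2) to statement (1).
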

\begin{proof}
$(1) \Longrightarrow (2)$ Let $I$ is an r-hyperideal. It is clear that $aoI \subseteq <a> \cap I$. Let $x \in <a> \cap I$. There exists some $s \in R$ such that $x \in aos$. Since $a \in V(R)$, we have for some $y \in R$, $a \in a^2oy$. Hence $x \in aos \subseteq a^2oyos=ao(aosoy) \subseteq aoI$, since $I \cap V(R) = \varnothing$.\\
$(2) \Longrightarrow (3)$ It is clear that $I \subseteq (I : a)$. Now let $x \in (I : a)$. Then $aox \subseteq I$ and so $aox \subseteq <a> \cap I$. By part (2), we have $aox \subseteq aoI$. Thus $x \in I$.\\
$(3) \Longrightarrow (4)$ and $(4) \Longrightarrow (1)$ follow from Theorem 4.12 in \cite{amer2}. 
\end{proof} 
\begin{Thm} 
The intersection of a nonempty set of r-hyperideals in $R$, is an r-hyperideal.
\end{Thm}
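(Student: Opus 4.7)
The plan is to verify the two conditions (hyperideal and r-hyperideal axiom) pointwise on the indexing set. Let $\{I_\alpha\}_{\alpha \in \Lambda}$ be a nonempty family of r-hyperideals of $R$, and put $I = \bigcap_{\alpha \in \Lambda} I_\alpha$. First I would record the routine structural observation that $I$ is itself a hyperideal: closure under differences follows from $a, b \in I_\alpha$ implying $a - b \in I_\alpha$ for every $\alpha$, and absorption follows from $rox \subseteq I_\alpha$ whenever $x \in I_\alpha$ and $r \in R$. Under the paper's standing convention that all hyperideals are $\mathbf{C}$-hyperideals, I would also confirm the $\mathbf{C}$-hyperideal property of $I$: if $A \in \mathbf{C}$ meets $I$, pick a common element $c \in A \cap I$; then $c \in A \cap I_\alpha$ for each $\alpha$, so $A \subseteq I_\alpha$ by the $\mathbf{C}$-hyperideal property of $I_\alpha$, and hence $A \subseteq I$.

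The core step is then a one-line deduction from the definition. Assume $x, y \in R$ satisfy $xoy \subseteq I$ and $ann(x) = \{0\}$. For each $\alpha \in \Lambda$, the inclusion $xoy \subseteq I \subseteq I_\alpha$ together with $ann(x) = \{0\}$ and the r-hyperideal property of $I_\alpha$ yields $y \in I_\alpha$. Intersecting over $\alpha$ gives $y \in I$, which is exactly what is needed.

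I do not anticipate any genuine obstacle. The only subtleties worth flagging are (i) that the annihilator hypothesis $ann(x) = \{0\}$ is intrinsic to the element $x$ and does not depend on the particular hyperideal, so it transfers uniformly across the whole family, and (ii) the bookkeeping check that intersections preserve the $\mathbf{C}$-hyperideal condition so that the proof lives inside the class of hyperideals the paper has been working with throughout.
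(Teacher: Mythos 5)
Your proposal is correct and follows essentially the same argument as the paper: pass the hypothesis $xoy\subseteq I\subseteq I_i$ and $ann(x)=\{0\}$ to each member of the family and intersect the conclusions. The extra verification that the intersection remains a $\mathbf{C}$-hyperideal is a reasonable bookkeeping addition that the paper subsumes under ``Clearly $I$ is a hyperideal.''
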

\begin{proof}
Let $\{I_i\}_{i \in \Omega}$ be a nonempty set of r-hyperideals in $R$ and $I=\bigcap_{i \in \Omega} I_i$. Clearly $I$ is a hyperideal. Now, assume that for $x,y \in R$, $xoy \subseteq I$ such that $ann(x)=\{0\}$. Then we have $xoy \subseteq I_i$ for every $i \in \Omega$. Since $I_i$ is an r-hyperideal for every $i \in \Omega$ and $ann(x)=\{0\}$, we obtain $y \in I_i$. It means $y \in I$.
\end{proof}

\begin{Thm} \label{88024}
If $I$ is an r-hyperideal, then $I \subseteq Z(R)$.
\end{Thm}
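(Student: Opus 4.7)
The plan is to argue by contradiction, exploiting that a proper $r$-hyperideal cannot absorb every element of $R$ via multiplication by a regular (non zero-divisor) element. Implicit in the statement is that $I$ is a proper hyperideal; otherwise $I \subseteq Z(R)$ would force $R = Z(R)$. So I will take $I$ proper and aim to show that any element of $I$ which fails to be a zero divisor forces $I$ to swallow all of $R$.

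First I would recall the characterization of $Z(R)$ in terms of annihilators: by the definition given in the Preliminaries, $a \in Z(R)$ if and only if there exists some nonzero $y \in R$ with $aoy = \{0\}$, i.e., if and only if $\mathrm{ann}(a) \neq \{0\}$. Hence the claim $I \subseteq Z(R)$ is equivalent to showing that no element of $I$ has trivial annihilator.

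Now suppose for contradiction that some $a \in I$ satisfies $\mathrm{ann}(a) = \{0\}$. Since $I$ is proper, choose any $b \in R \setminus I$. Because $I$ is a hyperideal and $a \in I$, the absorbing property of hyperideals yields $aob \subseteq I$. Combined with $\mathrm{ann}(a) = \{0\}$, the defining condition of an $r$-hyperideal (applied with the roles $x = a$, $y = b$) forces $b \in I$, contradicting the choice of $b$. Hence every $a \in I$ must satisfy $\mathrm{ann}(a) \neq \{0\}$, and thus $I \subseteq Z(R)$.

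I do not anticipate any real obstacle here; the argument is a direct one-line application of the $r$-hyperideal axiom once one picks an element outside $I$ to play the role of $y$. The only subtle point is the tacit assumption that $I$ is proper, which I would make explicit at the start of the proof.
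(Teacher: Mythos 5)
Your argument is correct and is essentially the paper's proof: the paper also argues by contradiction from an element $x \in I$ with $\mathrm{ann}(x)=\{0\}$, simply specializing your $b$ to the identity $1$ (so that $xo1 \subseteq I$ forces $1 \in I$). Your explicit remark that $I$ must tacitly be proper is a fair observation, as the paper relies on the same unstated assumption.
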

\begin{proof}
Let $I \nsubseteq Z(R)$. Then there exists $x \in I$ such that $x \notin Z(R)$. It means that $ann(x)=\{0\}$. Since $I$ is an r-hyperideal and $xo1 \subseteq I$ with $ann(x)=\{0\}$, we conclude that $1 \in I$ which is a contradiction. Thus $I \subseteq Z(R)$.
\end{proof}
\begin{Lem} \label{8824}
For $0 \neq x \in R$, $ann(x)$ is an r-hyperideal of $R$.
\end{Lem}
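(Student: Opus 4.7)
The plan is to verify the two requirements in the definition of an $r$-hyperideal. First I would check that $ann(x)$ is itself a hyperideal of $R$. Closure under subtraction follows from the distributivity inclusion $x\circ(y_1 - y_2)\subseteq x\circ y_1 + x\circ(-y_2)$ together with axiom (4) of a multiplicative hyperring, because both $x\circ y_1$ and $x\circ y_2$ collapse to $\{0\}$ when $y_1,y_2\in ann(x)$. Absorption reduces to the calculation
\[
x\circ(r\circ y) \;=\; (x\circ r)\circ y \;=\; (r\circ x)\circ y \;=\; r\circ(x\circ y) \;=\; r\circ\{0\} \;=\; \{0\},
\]
using associativity and commutativity of $\circ$.

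For the $r$-hyperideal condition, suppose $a,b\in R$ satisfy $a\circ b\subseteq ann(x)$ and $ann(a)=\{0\}$; the goal is to show $b\in ann(x)$. The containment $a\circ b\subseteq ann(x)$ says precisely that $x\circ c=\{0\}$ for every $c\in a\circ b$, which is the single identity $x\circ(a\circ b)=\{0\}$. Applying associativity and commutativity of the hyperoperation, I would rewrite
\[
x\circ(a\circ b) \;=\; (x\circ a)\circ b \;=\; (a\circ x)\circ b \;=\; a\circ(x\circ b).
\]
Hence $a\circ(x\circ b)=\{0\}$, which means that every $c\in x\circ b$ satisfies $a\circ c=\{0\}$, i.e., $c\in ann(a)$. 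Since $ann(a)=\{0\}$ by hypothesis, every such $c$ equals $0$, so $x\circ b\subseteq\{0\}$; as $x\circ b$ is nonempty, this forces $x\circ b=\{0\}$, i.e., $b\in ann(x)$.

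I do not anticipate a serious obstacle: the argument is essentially a one-line manipulation of an associative, commutative hyperoperation combined with the hypothesis $ann(a)=\{0\}$. The only point requiring minor care is that distributivity in a Rota multiplicative hyperring is only an inclusion rather than an equality, but this is harmless here because every right-hand side in the relevant calculations is contained in $\{0\}$ and is nonempty, so the inclusion collapses to equality exactly when needed.
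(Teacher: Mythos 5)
Your argument is correct and is essentially the paper's own proof: the paper likewise passes from $a\circ b\subseteq ann(x)$ to $a\circ b\circ x=\{0\}$ and then uses $ann(a)=\{0\}$ to conclude $b\circ x=\{0\}$, i.e.\ $b\in ann(x)$. You simply spell out the cancellation step in more detail and additionally verify that $ann(x)$ is a hyperideal, which the paper takes for granted.
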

\begin{proof}
Let $aob \subseteq ann(x)$ and $ann(a) =\{0\}$ for some $0 \neq x \in R$. Thus $aobox=\{0\}$. Since $ann(a) =\{0\}$, we have $box=\{0\}$. It means $b \in ann(x)$. Hence $ann(x)$ is an r-hyperideal of $R$.
\end{proof}

\begin{Thm} \label{8825}
Let $R$ is a commutative multiplicative hyperring with 1. Then the following are equivalent:\\ 
1) $R$ is an integral hyperdomain.\\
2) $\{0\}$ is the only r-hyperideal of $R$. \\
3) For every $x,y \in R$, $ann(xoy) = ann(x) \cup ann(y)$. 
\end{Thm}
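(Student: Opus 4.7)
The plan is to prove the cycle $(1) \Rightarrow (2) \Rightarrow (3) \Rightarrow (1)$, drawing on three main tools: Theorem \ref{88024}, which confines every r-hyperideal to $Z(R)$; Lemma \ref{8824}, which supplies $ann(x)$ as an r-hyperideal whenever $x \neq 0$; and the elementary fact that the abelian group $(R,+)$ cannot be written as the union of two proper subgroups.

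The implication $(1) \Rightarrow (2)$ is essentially a one-liner: the integral-hyperdomain hypothesis forces $Z(R) = \{0\}$ (if $x \in Z(R)\setminus\{0\}$, then $xoy = \{0\}$ for some $y \neq 0$, whence $0 \in xoy$ contradicts the hypothesis), and Theorem \ref{88024} then shrinks every r-hyperideal into $\{0\}$. For $(2) \Rightarrow (3)$, the inclusion $ann(x) \cup ann(y) \subseteq ann(xoy)$ is formal from associativity and commutativity; the reverse inclusion is the real work. Given $z \in ann(xoy)$, I would dispose of the degenerate cases $x = 0$ or $y = 0$ first (where the right-hand side becomes $R$), and otherwise use Lemma \ref{8824} with $(2)$ to reduce $ann(x), ann(y), ann(z)$ all to $\{0\}$; unwinding $(xoy)oz = \bigcup_{w \in xoy} woz = \{0\}$ pointwise then forces either $z = 0$ or every $w \in xoy$ to lie in $ann(z) = \{0\}$, and the latter collapses $xoy$ to $\{0\}$ and contradicts $y \in ann(x) = \{0\}$. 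For $(3) \Rightarrow (1)$, the key reduction is to show $xoy = \{0\}$ with $y \neq 0$ forces $x = 0$: from $xoy = \{0\}$ one reads off $ann(xoy) = R$, so $(3)$ yields $R = ann(x) \cup ann(y)$, the union-of-subgroups fact forces one of these annihilators to be $R$, and evaluating at $1$ via the scalar-identity property yields $x = 0$ or $y = 0$.

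The principal obstacle I expect is the reverse inclusion in $(2) \Rightarrow (3)$: here the hyperproduct $(xoy)oz$ must be resolved set-theoretically into statements about individual $w \in xoy$, and these must be repeatedly fed into Lemma \ref{8824} to keep the annihilators trivial long enough to close the argument. A second, more conceptual subtlety occurs in $(3) \Rightarrow (1)$, where one must bridge the gap between the no-zero-divisor statement one naturally obtains, namely $xoy = \{0\} \Rightarrow x = 0$ or $y = 0$, and the stronger-looking integral-hyperdomain definition $0 \in xoy \Rightarrow x = 0$ or $y = 0$; this rests on the scalar-identity convention for $1$ adopted throughout the paper.
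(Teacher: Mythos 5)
Your cycle $(1)\Rightarrow(2)\Rightarrow(3)\Rightarrow(1)$ is exactly the paper's decomposition, and each leg matches in substance: the paper's $(1)\Rightarrow(2)$ is your argument run directly (take $0\neq x\in I$, note $ann(x)=\{0\}$, feed $xo1\subseteq I$ into the r-hyperideal condition to get $1\in I$), its $(2)\Rightarrow(3)$ is literally the phrase ``follows from Lemma \ref{8824}'', and its $(3)\Rightarrow(1)$ reads $1\in ann(xoy)=ann(x)\cup ann(y)$ off the hypothesis. Your pointwise unwinding of $(xoy)oz=\{0\}$ in $(2)\Rightarrow(3)$ is a genuine improvement in rigor over the paper's one-line citation, and the union-of-subgroups fact you invoke in $(3)\Rightarrow(1)$ is harmless but unnecessary: $1\in ann(x)\cup ann(y)$ already puts $1$ in one of the two annihilators.

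The one point to correct is your closing remark on $(3)\Rightarrow(1)$. The bridge from the statement you obtain, $xoy=\{0\}\Rightarrow x=0$ or $y=0$, to the definition $0\in xoy\Rightarrow x=0$ or $y=0$ does \emph{not} come from any scalar-identity property of $1$ (the paper only assumes $1$ is an identity in the sense $a\in ao1$, not $a=ao1$). What closes that gap is the standing convention, stated in Section 2, that all hyperideals are ${\bf C}$-hyperideals: $xoy$ belongs to ${\bf C}$, so $xoy\cap\{0\}\neq\varnothing$ forces $xoy\subseteq\{0\}$, i.e.\ $0\in xoy$ already implies $xoy=\{0\}$. The identity property of $1$ is only needed for the final step $1\in ann(x)\Rightarrow x\in xo1=\{0\}\Rightarrow x=0$. (The paper itself silently starts its $(3)\Rightarrow(1)$ from ``$xoy=\{0\}$'' without remarking on this, so you were right to flag the subtlety; you just reached for the wrong tool to resolve it.)
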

\begin{proof}
$(1) \Longrightarrow (2)$ Assume that $I$ is a nonzero proper r-hyperideal of $R$. Then there exists $0 \neq x \in I $. Since $R$ is an integral hyperdomain, then $ann(x)=\{0\}$. Since $xo1 \subseteq I$ and $ann(x)=\{0\}$, we conclude that $1 \in I$ which is a contradiction. Hence $\{0\}$ is the only r-hyperideal of $R$.\\
$(2) \Longrightarrow (3)$ follows from Lemma \ref{8824}.\\
$(3) \Longrightarrow (1)$ Let $xoy =\{0\}$ , for some $x,y \in R$. By the hypothesis, we have $1 \in ann(x) \cup ann(y)$. Hence $x=0$ or $y =0$. Thus $R$ is an integral hyperdomain.
\end{proof}

\begin{Thm} \label{8826}
Let $x+y=1$ for some $x,y \in R$. Then $ann(x) + ann(y)$ is an r -hyperideal. 
\end{Thm}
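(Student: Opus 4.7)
The plan is to verify the r-hyperideal condition directly; the underlying hyperideal property follows since annihilators and sums of hyperideals are hyperideals. Suppose $a,b \in R$ satisfy $aob \subseteq ann(x)+ann(y)$ and $ann(a)=\{0\}$; the goal is to show $b \in ann(x)+ann(y)$. My strategy is first to establish the key identity $xoyob = \{0\}$, and then to exploit $x+y = 1$ to split $b$ into a suitable sum.

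For the first step, I would unpack the hypothesis element-wise: each $w \in aob$ can be written as $w = p + q$ with $p \in ann(x)$ and $q \in ann(y)$, so $xop = yoq = \{0\}$. Using sub-distributivity, commutativity, and the convention $ro\{0\} = \{0\}$ standard in multiplicative hyperrings,
\[ xoyow \subseteq xoyop + xoyoq = yo(xop) + xo(yoq) = yo\{0\} + xo\{0\} = \{0\}. \]
Taking the union over $w \in aob$ yields $ao(xoyob) = xoyo(aob) \subseteq \{0\}$, so every element $u \in xoyob$ satisfies $aou = \{0\}$, placing $u \in ann(a) = \{0\}$. Hence $xoyob = \{0\}$, which reads as $xo(yob) = \{0\}$ and $yo(xob) = \{0\}$; equivalently, $yob \subseteq ann(x)$ and $xob \subseteq ann(y)$.

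For the final step, since $1$ is an identity of $R$ and $1 = x+y$, I have $b \in bo1 = bo(x+y) \subseteq box + boy$. Writing $b = s + t$ with $s \in box \subseteq ann(y)$ and $t \in boy \subseteq ann(x)$ then shows $b \in ann(x)+ann(y)$, as required.

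The main obstacle I anticipate is the careful bookkeeping of the hyperoperation under sums: the hypothesis $aob \subseteq ann(x) + ann(y)$ yields a decomposition only element by element, and the sub-distributive containments are not equalities, so the final membership $b \in ann(x)+ann(y)$ must be extracted from a specific element-level decomposition of $b$ rather than from any set identity.
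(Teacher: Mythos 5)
Your proof is correct and follows essentially the same route as the paper's: establish $xoyob=\{0\}$ by cancelling $a$ via $ann(a)=\{0\}$, deduce $box\subseteq ann(y)$ and $boy\subseteq ann(x)$, and conclude from $b\in bo(x+y)\subseteq box+boy$. You merely fill in the element-wise decomposition and sub-distributivity details that the paper dismisses as ``easy to see.''
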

\begin{proof}
Let $aob \subseteq ann(x) + ann(y)$ and $ann(a) = \{0\}$. It is easy to see that $aoboxoy = \{0\}$. Therefore we have $xoyob = \{0\}$, Since $ann(a) = \{0\}$. Hence $box \subseteq ann(y)$
and $boy \subseteq Ann(x)$. Therefore, $b=bo1= bo(x+y) \subseteq box+ boy\subseteq ann(y) + ann(x)$. Consequently, $ ann(x) + ann(y) $ is an r-hyperideal of $R$. 
\end{proof}

\begin{Thm} \label{8827}
Let $R$ be a reduced hyperring and $P$ be a minimal hyperideal of $R$. Suppose that $s \in R$ is an idempotent element. Then $P + ann(s)$ is
an r-hyperideal. 
\end{Thm}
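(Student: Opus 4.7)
The plan is to do a case analysis on whether $s$ lies in $P$, reducing one case to Theorem \ref{8826} and handling the other case directly. I first record two preparatory computations. Reading idempotency of $s$ as $sos=\{s\}$ and using the weak distributivity $ao(b+c)\subseteq aob+aoc$ together with the scalar identity $1$, one gets
\[ so(1-s)\subseteq so1-sos=\{s\}-\{s\}=\{0\}, \]
so $so(1-s)=\{0\}$, i.e.\ $1-s\in ann(s)$. Then associativity gives, for every $p\in R$, $(pos)o(1-s)=po(so(1-s))=\{0\}$, so $pos\subseteq ann(1-s)$, and symmetrically $po(1-s)\subseteq ann(s)$. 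Primality of $P$ combined with $so(1-s)=\{0\}\subseteq P$ forces exactly one of $s\in P$ or $1-s\in P$ (not both, since $1\notin P$).

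Case 1: $s\notin P$. For any $r\in ann(s)$, $sor=\{0\}\subseteq P$ and $s\notin P$ force $r\in P$ by primality, so $ann(s)\subseteq P$ and thus $P+ann(s)=P$. To see that $P$ itself is then an r-hyperideal, take $aob\subseteq P$ with $ann(a)=\{0\}$: primality gives $a\in P$ or $b\in P$, but the classical fact that minimal primes of a reduced commutative (hyper)ring are contained in $Z(R)$ rules out the first alternative, since $ann(a)=\{0\}$ says $a\notin Z(R)$. Hence $b\in P\subseteq P+ann(s)$.

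Case 2: $s\in P$. Symmetrically $ann(1-s)\subseteq P$, so $ann(s)+ann(1-s)\subseteq P+ann(s)$. For the reverse, given $p\in P$, the scalar identity and weak distributivity give $\{p\}=po1\subseteq pos+po(1-s)$, and the preparatory containments place this in $ann(1-s)+ann(s)$. Thus $P+ann(s)=ann(s)+ann(1-s)$, which by Theorem \ref{8826} (applied with $x=1-s$, $y=s$, so that $x+y=1$) is an r-hyperideal.

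The main obstacle I expect is the careful handling of the \emph{one-sided} distributivity $ao(b+c)\subseteq aob+aoc$ together with the set-valued nature of $o$; in particular the crucial equality $so(1-s)=\{0\}$ needs idempotency to be interpreted as $sos=\{s\}$ rather than the weaker $s\in sos$. A secondary issue is justifying the classical fact used in Case 1 that every element of a minimal prime of a reduced commutative hyperring is a zero-divisor; if this is not available as an earlier lemma, a short argument mimicking the ordinary-ring localization proof (build a multiplicatively closed set from $(R\setminus P)\cup\{a\}$, use minimality of $P$ to force $0$ into it, then apply reducedness to pass from a nilpotent equation to $soa=\{0\}$ with $s\notin P$) would suffice.
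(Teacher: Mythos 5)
Your argument is correct (modulo the same background facts the paper itself leans on) but it follows a genuinely different route from the paper's. The paper argues directly: from $aob\subseteq P+ann(s)$ with $ann(a)=\{0\}$ it asserts the existence of some $x\notin P$ with $xos=\{0\}$, cancels $a$ from $xosoaob=\{0\}$, uses primality of $P$ to conclude $sob\subseteq P$, and then splits $b=1ob$ into a $P$-part plus an $ann(s)$-part via $sob+1ob-sob$. Observe that the existence of such an $x$ is precisely the classical statement that $ann(s)\not\subseteq P$ for $s$ in a minimal prime of a reduced ring, and it can only hold when $s\in P$ (if $s\notin P$, primality forces $ann(s)\subseteq P$); so the paper's proof really only treats your Case~2, and your case split is not a detour but a repair. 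Your Case~2 is also cleaner at the finish: identifying $P+ann(s)$ with $ann(s)+ann(1-s)$ and invoking Theorem~\ref{8826} replaces the paper's rather delicate set-valued decomposition of $b$. The costs are symmetric: both you and the paper silently read ``minimal hyperideal'' as ``minimal prime hyperideal'' (primality of $P$ is indispensable in both arguments), and both rely on the fact -- not proved in this paper for hyperrings -- that elements of a minimal prime of a reduced hyperring are zero-divisors (the paper needs it to produce $x$, you need it to rule out $a\in P$ in Case~1); your sketched localization argument for that fact is the right one, though executing it requires the ``maximal hyperideal disjoint from a multiplicatively closed set is prime'' machinery, which the paper only develops in its r- and n-multiplicative variants. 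Your explicit observation that idempotency must be read as $sos=\{s\}$ (not merely $s\in sos$) to obtain $so(1-s)=\{0\}$ is a genuine point that the paper glosses over and that its own final step also needs.
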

\begin{proof}
Let $aob \subseteq P + ann(s)$ and $ann(a) = \{0\}$. Therefore for each element $t \in aob$, there exists $\alpha \in P$ and $\beta \in ann(s)$ such that $t=\alpha+\beta$. It is clear that there exists $x \notin P$ with $xos=\{0\}$. Therefore we have $xosoaob =\{0\}$. Since $ann(a) = \{0\}$, we get $xosob =\{0\}\subseteq P$. Hence $sob \subseteq P$. Since $sob+1ob-sob \subseteq P + ann(s)$ and $b=1ob$, we have $b \in P + ann(s)$. Consequently, $ P+ ann(s) $ is an r-hyperideal of $R$. 
\end{proof}
\section{the relations between $r$-hyperideals and prime hyperideals}
In this section, we continue the study of basic properties of r-hyperideals begun in the previous section.
\begin{Thm} \label{8831}
Every maximal r-hyperideal of $R$ is a prime hyperideal. 
\end{Thm}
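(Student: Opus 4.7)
The plan is to show that if a maximal r-hyperideal $I$ were not prime, then the colon hyperideal would produce a strictly larger proper r-hyperideal, contradicting maximality. So I suppose $I$ is maximal among proper r-hyperideals of $R$ and, for contradiction, that $xoy \subseteq I$ for some $x,y \in R$ with $x \notin I$ and $y \notin I$. The object to study is then $(I:y)=\{r \in R : roy \subseteq I\}$.

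First I would record the easy structural facts. Using the weak distributivity of the hyperoperation and the hyperideal absorption property of $I$, a routine check shows that $(I:y)$ is itself a hyperideal. The containment $I \subseteq (I:y)$ is immediate, and it is strict because $x \in (I:y) \setminus I$ by hypothesis. Moreover $(I:y)$ is proper: if $1$ were to lie in $(I:y)$, then from $1oy \subseteq I$ and $y \in 1oy$ (since $1$ is an identity) we would get $y \in I$, contradicting the assumption.

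The main step is to verify that $(I:y)$ is an r-hyperideal. Take $a,b \in R$ with $aob \subseteq (I:y)$ and $ann(a)=\{0\}$. Then $(aob)oy \subseteq I$, and by associativity of the hyperoperation this equals $ao(boy)$. For each individual $d \in boy$ we therefore have $aod \subseteq ao(boy) \subseteq I$, so the r-hyperideal property of $I$ applied with $ann(a)=\{0\}$ forces $d \in I$. Hence $boy \subseteq I$, which is exactly the statement $b \in (I:y)$, as required.

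Putting everything together, $(I:y)$ is a proper r-hyperideal that properly contains $I$, contradicting the maximality of $I$; therefore no such $x,y$ exist and $I$ is prime. The only slightly delicate point is the middle step, where one must pass from the hyperproduct inclusion $ao(boy) \subseteq I$ down to element-level inclusions $aod \subseteq I$ for $d \in boy$ so that the definition of r-hyperideal (stated for individual elements) can be invoked. Associativity of the hyperoperation together with the definition $aoS=\bigcup_{s \in S}aos$ is precisely what makes this reduction go through, and once that is handled the rest is formal.
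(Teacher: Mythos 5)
Your proposal is correct and follows essentially the same route as the paper: both arguments hinge on the colon hyperideal ($(M:a)$ in the paper, $(I:y)$ in yours) being an r-hyperideal that contains the maximal one, and then invoking maximality. The only difference is cosmetic --- you argue by contradiction and explicitly verify the step the paper dismisses with ``it is easy to see,'' namely that the colon of an r-hyperideal is again a proper r-hyperideal.
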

\begin{proof}
Let $M$ be a maximal r-hyperideal of $R$. Let $aob \subseteq M$ and $a \notin M$. It is easy to see that $(M:a)$ is an r-hyperideal of $R$. Moreover, we have $M \subseteq (M:a)$ and $b \in (M:a)$. Since $M$ is a maximal r-hyperideal of $R$, we get $(M:a)=M$. Thus $b \in M$.
\end{proof}
\begin{Thm}
Let $P$ be a prime hyperideal of $R$. $P$ is an r-hyperideal of $R$ if and only if it consists entirely of zerodivisors .
\end{Thm}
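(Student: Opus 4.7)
The plan is to prove the two directions separately, and both reduce to short arguments using results already established.

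For the forward direction, suppose $P$ is a prime hyperideal of $R$ which is simultaneously an r-hyperideal. Then Theorem \ref{88024} applies verbatim and gives $P\subseteq Z(R)$, so $P$ consists entirely of zero divisors. This direction requires no further work.

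For the converse, assume $P$ is prime and $P\subseteq Z(R)$. To verify the r-hyperideal condition, I would take arbitrary $x,y\in R$ with $xoy\subseteq P$ and $ann(x)=\{0\}$, and show $y\in P$. Since $P$ is prime, we get the dichotomy $x\in P$ or $y\in P$. In the first case, $x\in P\subseteq Z(R)$, which by definition of $Z(R)$ supplies a nonzero $z\in R$ with $xoz=\{0\}$; but then $0\neq z\in ann(x)$, contradicting $ann(x)=\{0\}$. So this case cannot occur, and we must have $y\in P$, as required.

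There is essentially no serious obstacle here: the statement is a direct hybrid of Theorem \ref{88024} and the definition of a prime hyperideal, and the only subtle point is correctly translating between ``$x\in Z(R)$'' and ``$ann(x)\neq\{0\}$,'' which is immediate from the definition of a zero divisor. The proof should fit in a few lines.
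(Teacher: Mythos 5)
Your proof is correct and follows essentially the same route as the paper: the forward direction is exactly the appeal to Theorem \ref{88024}, and your converse is the paper's argument with the details filled in (the paper compresses your case analysis into the single sentence that primeness plus ``$P$ consists of zerodivisors'' forces $y\in P$). Your explicit handling of the translation between $x\in Z(R)$ and $ann(x)\neq\{0\}$ is a welcome clarification of a step the paper leaves implicit.
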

\begin{proof}
$\Longrightarrow$  is clear by Theorem \ref{88024}\\
$\Longleftarrow$
Let $P$ consist entirely of zerodivisors. Let $xoy \subseteq P$ for some $x,y \in R$ and $ann(x)=\{0\}$. Since $P$ is a prime hyperideal of $R$ and $P$ consists entirely of zerodivisors, we get $y \in P$. Consequently, $P$ is an r-hyperideal of $R$.
\end{proof}
\begin{Thm}
Let $R$ be a commutative multiplicative hyperring with 1 and let $P_1, \cdot \cdot \cdot, P_n$ be incomparable prime hyperideals of $R$. If $P=\bigcap_{i=1}^n P_i$
is an r-hyperideal of $R$, then
$P_i$ is an r-hyperideal for all $1 \leq i \leq n$.
\end{Thm}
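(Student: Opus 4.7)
The plan is to use the previous theorem, which characterizes a prime hyperideal $P_i$ as an $r$-hyperideal precisely when every element of $P_i$ is a zerodivisor. So the task reduces to proving, for each $i$, that $P_i \subseteq Z(R)$, i.e., that $ann(x) \neq \{0\}$ for every $x \in P_i$.

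Fix $i$ and $x \in P_i$. The strategy is to construct $y \in R$ with $y \notin P_i$ yet $xoy \subseteq P$, because then, since $P$ is an $r$-hyperideal and $y \notin P$, the defining property forces $ann(x) \neq \{0\}$. The construction of $y$ exploits incomparability in the standard way: for each $j \neq i$ choose $y_j \in P_j \setminus P_i$, and let $H$ denote the hyperproduct $y_{j_1} o y_{j_2} o \cdots o y_{j_{n-1}}$ over all indices $j \neq i$. For each $j \neq i$, the factor $y_j$ lies in the hyperideal $P_j$, so $H \subseteq P_j$, and hence $H \subseteq \bigcap_{j \neq i} P_j$. Moreover $H \not\subseteq P_i$: if it were, then since $y_{j_1} \notin P_i$ and $P_i$ is prime, the inclusion $y_{j_1} o b \subseteq P_i$ for each $b$ in the hyperproduct of the remaining factors would force that remaining hyperproduct into $P_i$; iterating peels off factors one at a time until some $y_{j_k}$ itself lies in $P_i$, a contradiction. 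Pick any $y \in H \setminus P_i$.

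With such a $y$, note that $xoy \subseteq P_i$ because $x \in P_i$, while $xoy \subseteq P_j$ for each $j \neq i$ because $y \in P_j$; altogether $xoy \subseteq P$. Since $y \notin P_i$ we have $y \notin P$, and since $P$ is an $r$-hyperideal this forces $ann(x) \neq \{0\}$, i.e., $x \in Z(R)$. Hence $P_i \subseteq Z(R)$, and the previous theorem yields that $P_i$ is an $r$-hyperideal, as required.

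I expect the main obstacle to be making the iterated primeness step precise in the hyperring setting, where the hyperproduct $H$ is a subset rather than a single element. The remedy is to apply primeness of $P_i$ one pair of factors at a time, treating the hyperproduct of the remaining factors as a varying set of possible right-factors, rather than invoking a one-shot ring-theoretic argument of the form ``$y_{j_1} \cdots y_{j_{n-1}} \in P_i$ implies some $y_{j_k} \in P_i$''.
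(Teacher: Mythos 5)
Your proof is correct, and it takes a recognizably different route from the paper's, although both hinge on the same crucial construction: an element $z$ of $\bigl(\prod_{k \neq i} P_k\bigr) \setminus P_i$, whose existence rests on incomparability plus the iterated-primeness argument you spell out (the paper simply asserts such a $z$ exists without justification, so your careful treatment of the hyperproduct-as-subset issue is a genuine improvement). The difference lies in how that element is used. The paper verifies the r-hyperideal condition for $P_t$ directly: starting from $xoy \subseteq P_t$ with $ann(x)=\{0\}$, it forms $xoyoz \subseteq P$, invokes the r-hyperideal property of $P$ to cancel $x$ and get $yoz \subseteq P \subseteq P_t$, and then uses primeness of $P_t$ with $z \notin P_t$ to conclude $y \in P_t$. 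You instead route through the preceding theorem (a prime hyperideal is an r-hyperideal iff it consists entirely of zerodivisors), reducing everything to showing $P_i \subseteq Z(R)$: for $x \in P_i$ you form $xoz \subseteq P$ with $z \notin P$, and read off $ann(x) \neq \{0\}$ from the contrapositive of the r-hyperideal property of $P$. Your version buys a cleaner logical shape (one application of the r-hyperideal property, no second cancellation step) at the cost of depending on the zerodivisor characterization; the paper's version is self-contained modulo the unexplained existence of $z$. Both arguments share the paper's standing convention of identifying ``$ann(x)\neq\{0\}$'' with ``$x$ is a zerodivisor,'' which is harmless here.
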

\begin{proof}
Let $xoy \subseteq P_t$ for some $1 \leq t \leq n$ and $ann(x) = \{0\}$. We must show $y \in P_t$. Suppose that $z \in (\Pi _{k \neq t} P_k) \backslash P_t$. Then we have $xoyoz \subseteq P$. By the assumption, we have $yoz \subseteq P$. It means $ yoz \subseteq P_t$. Since $P_t$ is a prime hyperideal of $R$ and $z \notin P_t$, we conclude that $y \in P_t$.
\end{proof}
Recall that a nonzero hyperideal $I$ of $R$ is called {\it essential} if $I \cap J \neq \{0\}$ for every nonzero hyperideal $J$ of $R$.
\begin{Thm} \label{8832}
Let $R$ be a reduced hyperring. If an r-hyperideal $I$ of $R$ is not essential, then there exists a minimal prime hyperideal $P$ with $I \subseteq P$ such that $P$ is a maximal r-hyperideal. 
\end{Thm}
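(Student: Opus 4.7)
My plan is to use the non-essentialness of $I$ to extract a nonzero $y$ with $Ioy=\{0\}$, then to invoke reducedness to pick a minimal prime hyperideal $P$ of $R$ not containing $y$ (which will then contain $I$), and finally to verify that this $P$ is a maximal $r$-hyperideal.

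For the first step, since $I$ is not essential there is a nonzero hyperideal $J$ of $R$ with $I\cap J=\{0\}$; pick any $0\neq y\in J$ and observe that for every $i\in I$, both $I$ and $J$ being hyperideals give $ioy\subseteq I\cap J=\{0\}$, so $I\subseteq ann(y)$. For the second step, since $R$ is reduced, $r(0)=\{0\}$ is the intersection of all minimal prime hyperideals of $R$; the nonvanishing of $y$ then produces a minimal prime hyperideal $P$ with $y\notin P$, and the relation $Ioy\subseteq\{0\}\subseteq P$ combined with $y\notin P$ and the primality of $P$ yields $I\subseteq P$. Because $R$ is reduced, every minimal prime lies inside $Z(R)$, so $P$ is a prime hyperideal consisting entirely of zero divisors; by the theorem just above, which states that a prime hyperideal is an $r$-hyperideal if and only if it consists entirely of zero divisors, $P$ is itself an $r$-hyperideal.

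The third step, which I expect to be the main obstacle, is showing that $P$ is \emph{maximal} among $r$-hyperideals. Suppose toward contradiction that $M$ is a proper $r$-hyperideal with $P\subsetneq M$, and pick $m\in M\setminus P$. Since $P$ is a minimal prime of a reduced hyperring, any $a\in ann(m)$ satisfies $aom=\{0\}\subseteq P$ with $m\notin P$, so primality of $P$ forces $a\in P$; hence $ann(m)\subseteq P\subsetneq M$. I would then combine this with the colon characterization $M=(M:a)$ for every $a\in V(R)\setminus M$ (Theorem \ref{8822}(3)) together with the fact that $R_P$ is a field (the localization of a reduced hyperring at a minimal prime) to lift the invertibility of the image of $m$ in $R_P$ to a relation in $R$ that traps a non-zero-divisor inside $M$; once such an element sits in $M$, the $r$-hyperideal property forces $1\in M$, contradicting properness. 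This lifting is the delicate piece, and it collapses in the case of finitely many minimal primes to ordinary prime avoidance: $M\subseteq Z(R)$ is then a finite union of minimal primes, so $M\subseteq Q$ for some $Q\neq P$, giving the contradiction $P\subsetneq M\subseteq Q$ between two distinct minimal primes.
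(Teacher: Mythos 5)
Your opening moves are sound and agree with the paper's setup: non-essentiality gives a nonzero hyperideal $J$ with $I\cap J=\{0\}$, hence $IoJ\subseteq I\cap J=\{0\}$; reducedness gives a minimal prime hyperideal $P$ with $J\nsubseteq P$; and for $a\in J\setminus P$ the containment $Ioa=\{0\}\subseteq P$ together with $a\notin P$ and primality forces $I\subseteq P$. The genuine gap is your third step. The theorem asserts that $P$ is \emph{maximal among r-hyperideals}, and you never actually prove this: the localization argument is only a sketch (you yourself flag the ``lifting'' as the delicate piece), it relies on properties of $R_P$ for multiplicative hyperrings that are nowhere established in the paper, and your prime-avoidance fallback assumes both that there are finitely many minimal primes and that $Z(R)$ is exactly their union --- neither of which is among the hypotheses. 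As written, you have shown only that $I$ lies in some minimal prime which is an r-hyperideal, which is weaker than the stated conclusion.

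The paper sidesteps this difficulty by reversing the logic. Instead of fixing $P$ and trying to verify its maximality, it first applies Zorn's Lemma to the family of r-hyperideals $M$ with $I\subseteq M$ and $M\cap J=\{0\}$ (unions of chains stay in this family, by the same argument as in Theorem \ref{8837}) to obtain a maximal such $M$. Then $MoJ\subseteq M\cap J=\{0\}$, so for $a\in J\setminus P$ one gets $Moa=\{0\}\subseteq P$ and hence $M\subseteq P$. By Theorem \ref{8831} a maximal r-hyperideal is prime, so $M$ is a prime hyperideal sitting inside the minimal prime $P$, which forces $M=P$; the maximality of $P$ is therefore never proved directly but inherited from the Zorn construction. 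If you want to repair your write-up, replace your third step by this construction: produce the maximal r-hyperideal first, and then identify it with your minimal prime $P$.
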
 
\begin{proof}
Let $I$ be an r-hyperideal of $R$ that is not essential. Then there exists a nonzero hyperideal $J$ of $R$ such that $I \cap J \neq \{0\}$. Thus there exists a minimal prime hyperideal $P$ of $R$ such that $J \nsubseteq P$, since $R$ is a reduced hyperring. By Zorn$^,$s Lemma, we infer that there exists a maximal r-hyperideal $M$ with $I \subseteq M$ such that $J \cap M=\{0\}$. It means $MoJ = \{0\}$. Let $a \in J \backslash P$. Thus $Moa=\{0\}\subseteq P$. Since $P$ is a prime hyperideal, then $M \subseteq P$. By Theorem \ref{8831}, $M$ is a prime hyperideal. Thus $M=P$ and $I \subseteq P$. 
\end{proof}

\begin{Thm} \label{8833}
Let $I, I_1,...,I_n$ be hyperideals of $R$. Let $I \subseteq I_1 \cup ...\cup I_n$ such that no $I_i$ can be removed from the union. If one of the $I_i^,$s, say $I_1$, is an r-hyperideal and the others have regular elements, then $I \subseteq I_1$. 
\end{Thm}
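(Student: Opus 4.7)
The plan is to argue by contradiction, adapting the classical prime-avoidance argument so that the r-hyperideal property of $I_1$ (rather than primality of some $I_k$) carries the load. Assume $I \not\subseteq I_1$ and fix $x \in I \setminus I_1$. The minimality of the covering yields, for each index $k$, an element $x_k \in I$ that lies in $I_k$ but in none of the other $I_j$; in particular, $x_1 \in I_1$ lies outside every $I_k$ with $k \geq 2$. For each $k = 2, \ldots, n$, fix a regular element $a_k \in I_k \cap V(R)$, using the hypothesis that the other $I_k$'s contain regular elements.

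The key claim is that $x o a_2 o a_3 o \cdots o a_n \subseteq I_1$. To see this, take any $t$ in this hyperproduct. Hyperideal absorption gives $t \in I$ (from $x \in I$) and $t \in I_k$ for every $k \geq 2$ (from $a_k \in I_k$). Now consider $w = x_1 + t \in I$; by the covering hypothesis, $w \in I_j$ for some $j$. If $j \geq 2$, then $w$ and $t$ both lie in $I_j$, which forces $x_1 = w - t \in I_j$ and contradicts the choice of $x_1$. Hence $w \in I_1$, and therefore $t = w - x_1 \in I_1$, proving the claim.

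With the claim in hand, I peel off the regular factors one at a time using the defining property of an r-hyperideal (cf.\ Lemma \ref{8821}(1)). For any $u \in x o a_2 o \cdots o a_{n-1}$, one has $u o a_n \subseteq I_1$; since $a_n \in V(R)$ has $ann(a_n) = \{0\}$, the r-hyperideal condition yields $u \in I_1$. Thus $x o a_2 o \cdots o a_{n-1} \subseteq I_1$, and iterating this peeling step with $a_{n-1}, a_{n-2}, \ldots, a_2$ eventually gives $x \in I_1$, contradicting the choice of $x$.

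The main obstacle is the key claim above: the hyperproduct $x o a_2 o \cdots o a_n$ is a set rather than a single element, and the argument must verify that \emph{every} one of its members lies in $I_1$. The trick $w = x_1 + t$, together with the fact that $x_1$ avoids every $I_j$ with $j \geq 2$, is exactly the device that converts the covering hypothesis into the pointwise containment needed before the peeling stage can be run.
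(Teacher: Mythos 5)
Your proof is correct and follows essentially the same route as the paper's: the additive translation trick (your $w = x_1 + t$, the paper's $x+y$) to show that the product of $I$ with the remaining hyperideals lands in $I_1$, followed by cancellation of the regular factors via the r-hyperideal property. The only cosmetic differences are that the paper proves $I \cap I_2 \cap \cdots \cap I_n \subseteq I_1$ and then invokes Lemma \ref{8821}(1) once at the hyperideal level, whereas you work pointwise with chosen regular elements $a_k$ and peel them off one at a time, and your contradiction framing is unnecessary since the same argument applies to every $x \in I$.
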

\begin{proof}
Since no $I_i$ can be removed from the union
$I_1 \cup ...\cup I_n$, then there exist $x \in I$ but $x \notin I_2 \cup ...\cup I_n$. Hence $x \in I_1$. Let $y \in I \cap I_2 \cap...\cap I_n$. Therefore $x+y \notin I_2 \cup ... \cup I_n$. It is clear that $x+y \in I \subseteq I_1 \cup ...\cup I_n$. This implies that $y \in I_1$. It means $I \cap I_2 \cap ... \cap I_n \subseteq I_1$. Since $I_2 o ... o I_n \subseteq I_2 \cap ... \cap I_n$, then we have $IoI_2 o ... o I_n \subseteq I_1$. By Lemma \ref{21} (1), we have $I \subseteq I_1$, since $I_2 \cap ... \cap I_n \cap V(R) \neq \varnothing$.
\end{proof}
\begin{Cor} [Prime Avoidance Theorem] \label{8834}
Let $I, P_1,...,P_n$ be hyperideals of $R$. Let $I \subseteq P_1 \cup ...\cup P_n$ such that no $P_i$ can be removed from the union. If one of the $P_i^,$s, say $P_1$, is a minimal prime hyperideal of $R$ and the others have regular elements, then $I \subseteq P_1$. 
\end{Cor}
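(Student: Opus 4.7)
The plan is to reduce this corollary to the preceding Theorem \ref{8833} by showing that the hypothesis on $P_1$ (being a minimal prime hyperideal) forces $P_1$ to be an $r$-hyperideal; all the other hypotheses of Theorem \ref{8833} are already assumed in the statement of the corollary, so nothing else is needed.

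The key step is to establish that a minimal prime hyperideal $P_1$ of $R$ consists entirely of zero-divisors, i.e.\ $P_1 \subseteq Z(R)$. Assuming this, the theorem proved just above (characterizing a prime hyperideal as an $r$-hyperideal precisely when it is contained in $Z(R)$) immediately gives that $P_1$ is an $r$-hyperideal. Then Theorem \ref{8833}, applied to $I_1 = P_1$ and $I_i = P_i$ for $i \geq 2$ (with $P_2, \dots, P_n$ containing regular elements by hypothesis), yields the desired conclusion $I \subseteq P_1$.

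So the work is to verify the classical fact that a minimal prime hyperideal consists of zero-divisors in the hyperring setting. The standard argument is to fix $x \in P_1$ and consider the set $S = \{y \cdot x^n : y \in R \setminus P_1,\ n \geq 0\}$, which is multiplicatively closed (in the appropriate hyper-sense) and does not meet $P_1$. Minimality of $P_1$ forces $0 \in S$: otherwise one could pass to the localization and produce a prime strictly inside $P_1$, contradicting minimality. From $0 \in S$ we obtain $y \in R \setminus P_1$ with $\{0\} \subseteq y o x^n$ for some $n \geq 1$; choosing $n$ minimal (with $n\geq 1$ since $y \neq 0$) one extracts a nonzero element $z$ with $\{0\} = x o z$, showing $x \in Z(R)$.

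The main obstacle is precisely the adaptation of this classical localization argument to multiplicative hyperrings, where products are subsets rather than single elements; this forces one to be careful about ``$y x^n = 0$'' versus ``$0 \in y o x^n$'' and to use that all hyperideals in this paper are $\mathbf{C}$-hyperideals, so that the radical and nilpotency conditions behave as expected. Once the inclusion $P_1 \subseteq Z(R)$ is secured, the rest of the corollary is a one-line deduction from the previous theorem and from Theorem \ref{8833}.
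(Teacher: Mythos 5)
Your route is exactly the one the paper intends: the corollary is stated with no proof at all, as an immediate consequence of Theorem \ref{8833} together with the (implicit) facts that a minimal prime hyperideal consists entirely of zero-divisors and that a prime hyperideal contained in $Z(R)$ is an r-hyperideal by the theorem proved just before it. Your reduction is therefore correct and follows the same approach; the one step you flag as the remaining obstacle --- adapting the localization argument to show $P_1 \subseteq Z(R)$ for a multiplicative hyperring --- is precisely the step the paper silently assumes, so your treatment is, if anything, more explicit than the source.
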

A nonempty subset $S$ of $R$ is called an r-multiplicatively closed subset of $R$ if $S$ contains at least a regular element $u \neq 1$, $0 \notin S$, $1 \in S$ and for all regular
elements $r \in S$ and all $a \in S$, $roa \subseteq S$ . 
\begin{Rem} \label{8835}
Let $S$ be an r-multiplicatively closed subset of $R$ and $T$ be multiplicatively closed subset of $R$ containing a regular element. Put $D=S \cup T\cup \{u \ \vert u \in sot, \ s \in S, t \in T\}$. Then $D$ is an r-multiplicatively closed subset of $R$. 
\end{Rem}
\begin{Cor} \label{8836}
$I$ is an r-hyperideal of $R$ if and only if $R - I$ is an r-multiplicatively closed subset of $R$. 
\end{Cor}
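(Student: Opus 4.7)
The plan is to verify the four defining conditions of an r-multiplicatively closed subset directly from the r-hyperideal property of $I$, and then to reverse the argument, leaning on the characterizations of r-hyperideals already established (especially Theorems \ref{88024} and \ref{8822}) to handle the delicate points.

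For the forward direction, assume $I$ is an r-hyperideal. Since $0 \in I$, we have $0 \notin R-I$. Theorem \ref{88024} gives $I \subseteq Z(R)$, so $1 \in R-I$; the same reasoning shows that every element $u$ with $ann(u)=\{0\}$ must lie in $R-I$, since otherwise $uo1 \subseteq I$ would force $1 \in I$, contradicting properness. Under the standing assumption that $R$ possesses a non-identity regular element, $R-I$ contains such an element. For closure, take regular $r \in R-I$ and $a \in R-I$: if $roa \not\subseteq R-I$, then $(roa)\cap I \neq \varnothing$, so the \textbf{C}-hyperideal hypothesis yields $roa \subseteq I$, and the r-hyperideal property forces $a \in I$, a contradiction.

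For the converse, suppose $R-I$ is r-multiplicatively closed. Since $1 \in R-I$ and $0 \in I$, $I$ is a proper hyperideal. Let $xoy \subseteq I$ with $ann(x)=\{0\}$, and assume toward a contradiction that $y \notin I$, so that $y \in R-I$. One first argues that $x \in R-I$ as well: if instead $x \in I$ with $ann(x)=\{0\}$, then the hyperideal property together with the closure axiom of $R-I$ (applied to a regular witness $u \neq 1$) produces a contradiction; equivalently, Theorem \ref{8822} recasts the closure condition on $R-I$ as $I = (I:a)$ for every regular $a \notin I$, which already encodes the r-hyperideal property. With $x \in R-I$ regular and $y \in R-I$, the closure axiom then gives $xoy \subseteq R-I$, contradicting $xoy \subseteq I$; hence $y \in I$ and $I$ is an r-hyperideal.

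The main obstacle is the converse direction, and more precisely the step that rules out $x \in I$ with trivial annihilator. The cleanest route is to invoke Theorem \ref{8822}: the closure condition on $R-I$ translates directly to $(I:a)=I$ for every regular $a \notin I$, which is exactly the characterization required for $I$ to be an r-hyperideal, bypassing any case analysis on the membership of $x$.
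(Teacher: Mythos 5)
The paper states this corollary with no proof at all, so your attempt is the only argument on record and must be judged on its own terms. Your forward direction is essentially right: $0\in I$ handles one axiom, Theorem \ref{88024} handles $1\in R-I$, and the closure axiom follows from the \textbf{C}-hyperideal convention exactly as you say (if $(roa)\cap I\neq\varnothing$ then $roa\subseteq I$, and the r-hyperideal property expels $a$ from $R-I$). You are also right that the clause ``$S$ contains a regular element $u\neq 1$'' does not come for free; it needs a hypothesis on $R$ that the paper never states, and flagging it as an assumption is the honest move.

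The converse, however, has a genuine gap that neither of your proposed patches closes. The paper's definition of an r-multiplicatively closed subset does not require $S$ to contain \emph{every} regular element of $R$ --- only at least one $u\neq 1$ --- so ``$R-I$ is r-multiplicatively closed'' does not rule out $I$ containing a non-zero-divisor $x$; in that case $xo1\subseteq I$ with $1\notin I$ already destroys the r-hyperideal property, and no contradiction with the axioms on $R-I$ is available. Concretely, reading ``regular'' as ``non-zero-divisor'' (as your own closure argument must, since it invokes $ann(x)=\{0\}$), take the hyperring $\mathbb{Z}_A$ of Example \ref{exa} and $I=2\mathbb{Z}$: the set $R-I$ of odd integers contains the non-zero-divisor $3\neq 1$, omits $0$, contains $1$, and satisfies $roa=\{5ra,7ra\}\subseteq R-I$ for all odd $r,a$; yet $2o1=\{10,14\}\subseteq I$, $ann(2)=\{0\}$ and $1\notin I$, so $I$ is not an r-hyperideal. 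Your fallback to Theorem \ref{8822} does not rescue the step, because condition (3) there quantifies only over regular $a\notin I$, and the same example satisfies that condition without being an r-hyperideal --- that is, the implication from (3) to (1) in Theorem \ref{8822} is itself unreliable for hyperideals containing regular elements. The corollary becomes true, and your argument then works essentially verbatim, only if the definition of r-multiplicatively closed set is strengthened to demand that $S$ contain every regular element of $R$, in analogy with the paper's own definition of an n-multiplicatively closed set, which explicitly requires $R-r(0)\subseteq S$.
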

\begin{Thm} \label{8837}
Let $S$ be an r-multiplicatively closed subset of $R$ and $K$ be an hyperideal of $R$ disjoint from $S$. Then there exists a hyperideal $I$ which is maximal in the set of all hyperideals of $R$ disjoint from $S$, containing $K$. Any such hyperideal $I$ is an r-hyperideal of $R$.
\end{Thm}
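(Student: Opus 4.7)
The argument naturally splits into an existence half via Zorn's lemma and a verification that every such maximal hyperideal is an r-hyperideal. For existence, I would let $\mathcal{F}$ be the collection of hyperideals of $R$ that contain $K$ and are disjoint from $S$, partially ordered by inclusion. This family is non-empty because $K \in \mathcal{F}$, and for any ascending chain $\{J_\lambda\}$ in $\mathcal{F}$ the union $\bigcup_\lambda J_\lambda$ is again a hyperideal, still contains $K$, and remains disjoint from $S$ (any putative element of $S$ in the union would already lie in some $J_\lambda$, contradicting $J_\lambda \cap S = \varnothing$). Zorn's lemma then delivers a maximal $I \in \mathcal{F}$.

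To prove such an $I$ is an r-hyperideal I would appeal to Corollary \ref{8836}, reducing matters to showing that $R \setminus I$ is an r-multiplicatively closed subset of $R$. Three of the four required properties are essentially automatic: $1 \in R \setminus I$ because $1 \in S$ and $S \cap I = \varnothing$; $0 \in I$, so $0 \notin R \setminus I$; and the distinguished regular element $u \neq 1$ guaranteed to lie in $S$ also lies in $R \setminus I$, supplying the non-triviality.

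The main obstacle is the remaining closure axiom: given a regular $r \in R \setminus I$ and an arbitrary $a \in R \setminus I$, one must show $r \circ a \subseteq R \setminus I$. My plan is to assume for contradiction that some element of $r \circ a$ lies in $I$; since $r \circ a \in \mathbf{C}$ and every hyperideal is a \textbf{C}-hyperideal, this already forces $r \circ a \subseteq I$. Because $a \notin I$ and $r \notin I$, maximality of $I$ supplies $s_a \in S \cap (I + \langle a \rangle)$ and $s_r \in S \cap (I + \langle r \rangle)$. Writing $s_a = i_a + z_a$ and $s_r = i_r + z_r$ with $i_a, i_r \in I$ and $z_a, z_r$ finite $R$-linear combinations of $a$ and $r$ respectively, I would expand $s_a \circ s_r$ and use $r \circ a \subseteq I$ together with the absorption property of $I$ to collapse every cross term, obtaining $s_a \circ s_r \subseteq I$. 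The delicate final step is to promote $s_a \circ s_r \subseteq I$ into an actual element of $S \cap I$, contradicting disjointness; because $S$ is only r-multiplicatively closed rather than fully multiplicatively closed, the regularity of $r$ has to be paired with the distinguished $u \in S \cap V(R)$ in order to ensure the needed product lands inside $S$. I expect this promotion to be the single most delicate point of the argument.
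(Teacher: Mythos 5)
Your existence half is exactly the paper's (Zorn's lemma applied to the family of hyperideals containing $K$ and disjoint from $S$; unions of chains stay disjoint from $S$), and routing the second half through Corollary \ref{8836} is reasonable in principle. But the argument you sketch for the closure axiom has a genuine gap at precisely the point you flag, and I do not see how to close it along the lines you propose. After you force $r\circ a\subseteq I$ and produce $s_a\in S\cap(I+\langle a\rangle)$ and $s_r\in S\cap(I+\langle r\rangle)$, the contradiction you need is an element of $S$ lying in $I$. Yet $s_a\circ s_r\subseteq I$ tells you nothing about membership in $S$: an r-multiplicatively closed set is only closed under products of a \emph{regular} element of $S$ with an element of $S$, and neither $s_a$ nor $s_r$ is known to be regular. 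The regularity you actually possess lives on $r$, which has been discarded by the time you form $s_a\circ s_r$; adjoining the distinguished $u\in S\cap V(R)$ does not rescue this, since $u\circ(s_a\circ s_r)$ lands in $S$ only if $s_a\circ s_r$ is already known to meet $S$. This is exactly the step where the classical ``maximal among ideals disjoint from a multiplicative set is prime'' argument breaks down for r-multiplicatively closed sets.

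The paper sidesteps this by keeping the argument one-sided, so that the regular element stays in play until the final product. Suppose $x\circ y\subseteq I$ with $ann(x)=\{0\}$ and $y\notin I$. Then $(I:x)$ contains $I$ properly (it contains $y$) and still contains $K$, so maximality of $I$ forces $(I:x)\cap S\neq\varnothing$; pick $u$ in that intersection, so that $u\circ x\subseteq I$. Now the only product that ever has to be pushed into $S$ is $u\circ x$ with $u\in S$ and $x$ regular --- which is the single closure property the definition of an r-multiplicatively closed set supplies (the paper invokes Remark \ref{8835} to place the regular element $x$ inside $S$ at this point). That yields $u\circ x\subseteq I\cap S=\varnothing$, the desired contradiction, whence $y\in I$. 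You should reorganize your second half around the colon hyperideal $(I:x)$ in this asymmetric way rather than around the symmetric pair $I+\langle a\rangle$, $I+\langle r\rangle$.
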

\begin{proof}
Let $\Delta$ be the set of all hyperideals of $R$ disjoint from $S$, containing $K$. Then $\Delta \neq \varnothing$, since $K \in \Delta$ .So $\Delta$ is a partially ordered set with respect to set inclusion relation. By 
Zorn$^,$s lemma, there exists a hyperideal $I$ which is maximal in $\Delta$. Let $xoy \subseteq I$ for some $x,y \in R$ and $ann(x)=\{0\}$. Let $y \notin I$. Since $y \in (I:x)$ then $(I:x)$ properly contains $I$. Now let $u \in (I:x) \cap S$. Hence $uox \subseteq I$. By Remark \ref{8835}, we have $x \in V(R) \subseteq S$. Therefore $uox \subseteq S$ which means $uox \subseteq I \cap S$. This is conradictory to the fact that $I \cap S = \varnothing$. Hence $(I:x) \cap S= \varnothing$. Consequently, $y \in I$. Thus $I$ is an r-hyperideal of $R$.
\end{proof}
\section{$n$-hyperideals }
In this section, we introduce the notion of n-hyperideals in a commutative multiplicative hyperring and we
investigate many properties of them.
\begin{Def}
A hyperideal $I$ of a multiplicative hyperring $R$ is called an n-hyperideal of $R$, if for all $x,y \in R$, $xoy \subseteq I$ and $x \notin r(0)$, then $y \in I$.
\end{Def}
\begin{Thm} \label{23}
Every n-hyperideal of $R$ is an r-hyperideal.
\end{Thm}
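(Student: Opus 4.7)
The plan is to reduce the $r$-hyperideal condition to the $n$-hyperideal condition by showing that the hypothesis $\mathrm{ann}(x)=\{0\}$ is strictly stronger than the hypothesis $x\notin r(0)$. Concretely, assume $I$ is an $n$-hyperideal, take $x,y\in R$ with $xoy\subseteq I$ and $\mathrm{ann}(x)=\{0\}$, and aim to deduce $y\in I$ by verifying that $x\notin r(0)$, after which the definition of an $n$-hyperideal finishes the argument.

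The key lemma-step is therefore: if $\mathrm{ann}(x)=\{0\}$ then $x\notin r(0)$. I would prove this by contrapositive. Suppose $x\in r(0)$. Since we are assuming throughout the paper that all hyperideals are $\mathbf{C}$-hyperideals, the characterization of $r(0)$ recalled in the preliminaries gives some $n\in\mathbb{N}$ with $x^n\subseteq\{0\}$, i.e.\ $x^n=\{0\}$. Choose $n$ minimal with this property. If $n=1$ then $x=0$, and $\mathrm{ann}(0)=R\ne\{0\}$ (using $1\ne 0$), a contradiction. If $n\geq 2$, then by minimality $x^{n-1}\ne\{0\}$, so there exists $z\in x^{n-1}$ with $z\ne 0$; but then $xoz\subseteq x^{n}=\{0\}$ forces $xoz=\{0\}$, so $z\in\mathrm{ann}(x)\setminus\{0\}$, again contradicting $\mathrm{ann}(x)=\{0\}$.

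With the lemma in hand the theorem is immediate: given the $r$-hyperideal hypothesis $xoy\subseteq I$ and $\mathrm{ann}(x)=\{0\}$, the lemma yields $x\notin r(0)$, and the $n$-hyperideal definition applied to $I$ gives $y\in I$. This verifies the $r$-hyperideal condition for $I$.

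The only delicate point is the contrapositive argument in the hyperring setting, since $x^{n-1}$ is a subset rather than a single element; the main obstacle is making sure we can extract a nonzero element of $\mathrm{ann}(x)$ from the set $x^{n-1}$. The minimality of $n$ resolves this by guaranteeing that $x^{n-1}$ is not contained in $\{0\}$, which lets us pick the required witness $z\ne 0$. Everything else is a direct unwinding of definitions.
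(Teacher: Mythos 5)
Your proof is correct and takes essentially the same route as the paper: reduce the $r$-hyperideal hypothesis to the $n$-hyperideal one via the implication $ann(x)=\{0\}\Rightarrow x\notin r(0)$ and then invoke the definition. The paper merely asserts that implication, whereas you justify it via the $\mathbf{C}$-hyperideal characterization of $r(0)$ and a minimality argument; that extra detail is sound under the paper's standing conventions (all hyperideals are $\mathbf{C}$-hyperideals and $\{0\}$ is a hyperideal, so $ann(0)=R$) and is the only difference.
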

\begin{proof}
Let $I$ be an n-hyperideal of $R$. Assume that $xoy \subseteq I$ with $ann(x)=\{0\}$ for some $x,y \in R$. Thus $x \notin r(0)$. Since $I$ is an n-hyperideal of $R$, we get $y \in I$ and therefore $I$ is an r-hyperideal. 
\end{proof}
\begin{Thm} \label{24}
Let $<0>$ be a primary hyperideal of $R$. Then the n-hyperideals and r-hyperideals are equivalent.
\end{Thm}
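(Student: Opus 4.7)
The statement is an equivalence, and Theorem~\ref{23} already delivers one direction, so the plan is to prove the converse: under the hypothesis that $<0>$ is a primary hyperideal, every r-hyperideal of $R$ is an n-hyperideal.

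I will fix an r-hyperideal $I$ and assume $xoy \subseteq I$ with $x \notin r(0)$, aiming at $y \in I$. The natural strategy is to reduce everything to the defining property of r-hyperideals by establishing the auxiliary claim: if $<0>$ is a primary hyperideal and $x \notin r(0)$, then $ann(x)=\{0\}$. Once that claim is in hand, the hypothesis $xoy \subseteq I$ combined with $ann(x)=\{0\}$ forces $y \in I$ by the r-hyperideal property, and we are done.

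To prove the auxiliary claim I would take an arbitrary $z \in ann(x)$ and examine $zox = \{0\} \subseteq <0>$. Applying the primary condition to this inclusion, reading $z$ as the first factor, yields $z \in <0>$ or $x \in r(<0>) = r(0)$. Since $x \notin r(0)$ by assumption, the second option is excluded, so $z \in <0> = \{0\}$, i.e.\ $z=0$; hence $ann(x) = \{0\}$ as desired.

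The main subtlety is that the primary condition is asymmetric in its two arguments: applying it in the order $xoz \subseteq <0>$ would only yield $z \in r(0)$, which is too weak (nilpotent elements need not vanish). The commutativity of $o$ is what lets me rewrite the product as $zox$ and read off the primary dichotomy in the orientation that places $z$ itself inside $<0>$. This orientation of the primary condition is the hinge of the whole argument; everything else is bookkeeping around the r-hyperideal definition.
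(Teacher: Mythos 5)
Your proof is correct and supplies exactly the details the paper omits (its own proof is just ``It is clear''): the key observation that $x \notin r(0)$ together with the primary condition on $<0>$, applied with $z$ as the first factor of $zox \subseteq <0>$, forces $ann(x)=\{0\}$, after which both directions reduce to Theorem~\ref{23} and the r-hyperideal definition. The only cosmetic caveat is that the paper's Definition of a primary hyperideal formally requires a \emph{nonzero} hyperideal, so the hypothesis on $<0>$ should be read as ``$<0>$ satisfies the primary condition,'' which is how you have used it.
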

\begin{proof}
It is clear. 
\end{proof}
\begin{Thm} \label{21}
Let $I$ be an n-hyperideal of $R$. Then $I \subseteq r(0)$. 
\end{Thm}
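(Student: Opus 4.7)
The plan is to argue by contradiction using the unit element $1$ together with the hyperideal absorption property. Take any $a\in I$ and suppose, towards a contradiction, that $a\notin r(0)$. My goal is to derive $1\in I$, which forces $I=R$ and contradicts the standing assumption that an n-hyperideal is a proper hyperideal.

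First I would observe that, because $I$ is a hyperideal and $a\in I$, the absorption clause (2) of the hyperideal definition gives $1oa\subseteq I$. Then, with the choice $x=a$ and $y=1$, the hypothesis $a\notin r(0)$ puts us exactly in the situation described by the definition of an n-hyperideal: $xoy=ao1\subseteq I$ together with $x=a\notin r(0)$ forces $y=1\in I$. That immediately yields $I=R$, contradicting properness, so the supposition $a\notin r(0)$ must fail. Hence every $a\in I$ lies in $r(0)$, i.e., $I\subseteq r(0)$.

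There is no real obstacle here; the argument is a direct application of the defining implication of an n-hyperideal to the pair $(a,1)$. The only thing to be slightly careful about is that the definition of n-hyperideal is implicitly taken to apply to proper hyperideals (as noted in the introduction), so the conclusion $1\in I$ must indeed be excluded. Writing the one-line proof in this form should suffice.
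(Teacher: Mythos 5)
Your proof is correct and follows essentially the same contradiction argument as the paper: pick $a\in I\setminus r(0)$, get $ao1\subseteq I$, and apply the defining implication of an n-hyperideal to force $1\in I$. The only cosmetic difference is that you justify $ao1\subseteq I$ via the absorption clause of the hyperideal definition, whereas the paper invokes the standing \textbf{C}-hyperideal assumption together with $a\in ao1$; both routes are valid.
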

\begin{proof} 
Let $I \nsubseteq r(0)$. It means that there exists an $x \in I$ but $x \notin r(0)$. Since $I$ is a {\bf C}-hyperideal and $x \in xo1$, then we have $xo1 \subseteq I$. Since
$I$ is an n-hyperideal of $R$, we have $1 \in I$ which means $I = R$. This is a contradiction. Thus $I \subseteq r(0)$. 
\end{proof}
\begin{Thm} \label{22}
The intersection of a nonempty set of n-hyperideals in $R$, is an n-hyperideal.
\end{Thm}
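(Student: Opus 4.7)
The plan is to mirror the structure used earlier for the analogous result on $r$-hyperideals (Theorem 3.4 in the excerpt). Given a nonempty family $\{I_i\}_{i \in \Omega}$ of $n$-hyperideals of $R$, I would set $I = \bigcap_{i \in \Omega} I_i$ and verify the two ingredients: that $I$ is a hyperideal, and that $I$ satisfies the defining implication of an $n$-hyperideal.

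The first ingredient is routine and I would dispatch it in a single sentence: closure under subtraction and absorption under the hyperoperation by arbitrary elements of $R$ both pass through a set-theoretic intersection, so $I$ inherits these properties from every $I_i$.

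For the second ingredient, I would take arbitrary $x,y \in R$ with $xoy \subseteq I$ and $x \notin r(0)$, and aim to conclude $y \in I$. From $xoy \subseteq I = \bigcap_{i \in \Omega} I_i$ it follows that $xoy \subseteq I_i$ for every $i \in \Omega$. Since each $I_i$ is an $n$-hyperideal and the same element $x$ satisfies $x \notin r(0)$ regardless of $i$, applying the defining implication to each $I_i$ yields $y \in I_i$ for all $i \in \Omega$, and hence $y \in I$.

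I do not expect any real obstacle here; the essence of the argument is that the hypothesis $x \notin r(0)$ is a property of $x$ alone and not tied to any particular $I_i$, so the same witness propagates uniformly across the family. This is precisely what makes the intersection proof for $n$-hyperideals go through in the same way as for $r$-hyperideals, and it explains why the author presents the statement without needing any auxiliary lemma.
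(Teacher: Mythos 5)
Your proposal is correct and follows essentially the same argument as the paper: pass the hypothesis $xoy \subseteq I$ down to each $I_i$, apply the defining implication of an $n$-hyperideal to each member (noting that $x \notin r(0)$ is independent of $i$), and conclude $y \in \bigcap_{i\in\Omega} I_i$. Your explicit remark that the witness condition on $x$ propagates uniformly is exactly the point that makes the paper's one-line application of the definition work.
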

\begin{proof}
Let $\{I_i\}_{i \in \Omega}$ be a nonempty set of n-hyperideals in $R$ and $I=\bigcap_{i \in \Omega} I_i$. Assume that for $x,y \in R$, $xoy \subseteq I$ such that $x \notin r(0)$. Then we have $xoy \subseteq I_i$ for every $i \in \Omega$. Since $I_i$ is an n-hyperideal for every $i \in \Omega$ and $x \notin r(0)$, we obtain $y \in I_i$. It means $y \in I$.
\end{proof}

\begin{Thm} \label{25}
Let $I$ be a proper hyperideal of $R$. Then the following statements are equivalent:\\
(1) $I$ is an n-hyperideal of $R$.\\
(2) $I = (I : a)$ for every $a \notin r(0)$.\\
(3) $I_1oI_2 \subseteq I$ such that $I_1 \cap (R - r(0)) \neq \varnothing$, for hyperideals $I_1, I_2$ of $R$, implies $I_2 \subseteq I$. 
\end{Thm}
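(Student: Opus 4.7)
The plan is to prove the equivalence by a cyclic chain $(1)\Rightarrow(2)\Rightarrow(3)\Rightarrow(1)$, since each implication is short once the appropriate witness is chosen.

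For $(1)\Rightarrow(2)$, I would fix $a\notin r(0)$ and note that the inclusion $I\subseteq(I:a)$ is automatic from the hyperideal axiom $roa\subseteq I$ for all $r\in I$. For the reverse, take $y\in(I:a)$, so that $aoy\subseteq I$. Since $I$ is assumed to be an n-hyperideal and $a\notin r(0)$, the defining property forces $y\in I$, giving equality.

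For $(2)\Rightarrow(3)$, suppose $I_1oI_2\subseteq I$ and pick any witness $a\in I_1\cap(R-r(0))$. For an arbitrary $b\in I_2$, we have $aob\subseteq I_1oI_2\subseteq I$, so $b\in(I:a)$. Applying hypothesis (2) gives $b\in I$, and hence $I_2\subseteq I$.

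For $(3)\Rightarrow(1)$, the natural move is to promote the elementwise hypothesis $xoy\subseteq I$ with $x\notin r(0)$ into a hyperideal-level hypothesis by taking the principal hyperideals $\langle x\rangle$ and $\langle y\rangle$. Since $R$ has $1$, we have $x\in\langle x\rangle$ and $y\in\langle y\rangle$, so $\langle x\rangle\cap(R-r(0))\neq\varnothing$. The key check is that $\langle x\rangle o\langle y\rangle\subseteq I$; this follows because every element of this product lies in some $RoRoxoy\subseteq RoI\subseteq I$, using the absorption property of the hyperideal $I$ together with $xoy\subseteq I$. Hypothesis (3) then yields $\langle y\rangle\subseteq I$ and in particular $y\in I$.

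The only non-trivial step is this last reduction to principal hyperideals in $(3)\Rightarrow(1)$; there one must be careful that the product $\langle x\rangle o\langle y\rangle$ indeed sits inside $I$, but this is routine once one writes elements of $\langle x\rangle$ as (finite sums of things absorbed into) $Rox$ and uses that $I$ is a $\mathbf{C}$-hyperideal absorbing products with $R$. The other two implications are essentially direct unpacking of definitions.
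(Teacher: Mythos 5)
Your proposal is correct and follows essentially the same route as the paper: the same cyclic chain $(1)\Rightarrow(2)\Rightarrow(3)\Rightarrow(1)$, the same witness $a\in I_1\cap(R-r(0))$ for $(2)\Rightarrow(3)$, and the same reduction to the principal hyperideals $\langle x\rangle$ and $\langle y\rangle$ for $(3)\Rightarrow(1)$. In fact you supply the verification that $\langle x\rangle o\langle y\rangle\subseteq I$, which the paper dismisses with ``then we are done.''
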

\begin{proof}
$(1) \Longrightarrow (2)$ Since $I \subseteq (I:x)$, we must show $(I:x) \subseteq I$. Let $y \in (I:x)$ and $x \notin r(0)$. Therefore $xoy \subseteq I$. Since $I$ is an n-hyperideal of $R$, we get $y \in I$. Consequently, $I=(I:x)$.\\
$(2) \Longrightarrow (3)$ Let $I_1oI_2 \subseteq I$ such that $I_1 \cap (R - r(0)) \neq \varnothing$. Thus there
exists an $x \in I_1$ but $x \notin r(0)$. Hence $xoI_2 \subseteq I$. It means that $I_2 \subseteq (I : x) = I$. \\
$(3) \Longrightarrow (1)$ Assume that $xoy \subseteq I$ but $x \notin r(0)$ for some $x,y \in R$. If we consider $I_1=<x>$ and $I_2=<y>$, then we are done.
\end{proof}
\begin{Cor}
Let $L$ be a hyperideal of R such that $L \cap (R-r(0)) \neq \varnothing$. If $I,J$ are n-hyperideals of $R$ such that $IoL=JoL$, then $I=J$. 
\end{Cor}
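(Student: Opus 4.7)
The plan is to derive this as an immediate two-sided application of part (3) of Theorem \ref{25}, using $L$ as the auxiliary hyperideal. By symmetry, it suffices to show $I\subseteq J$, and then the same argument with the roles of $I$ and $J$ swapped gives the reverse containment.

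First I would observe that since $J$ is a hyperideal of $R$, the product $JoL$ is contained in $J$ (because $jol\subseteq J$ for each $j\in J$ and $l\in L\subseteq R$). Combining this with the hypothesis $IoL=JoL$ gives
\[
LoI \;=\; IoL \;=\; JoL \;\subseteq\; J.
\]
Now I would invoke Theorem \ref{25}(3) applied to the n-hyperideal $J$, taking $I_1=L$ and $I_2=I$. The condition $L\cap(R-r(0))\neq\varnothing$ is exactly the side hypothesis of the theorem, so the implication yields $I\subseteq J$.

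An identical argument, with $I$ and $J$ exchanged, uses $IoL\subseteq I$ and the fact that $I$ is an n-hyperideal to conclude $J\subseteq I$. Hence $I=J$.

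There is no substantial obstacle here: the only subtle point is to make sure one invokes the \emph{correct} n-hyperideal in Theorem \ref{25}(3) (the one on the right-hand side of the containment) and to notice that the hypothesis $L\cap(R-r(0))\neq\varnothing$ plays the role of the regular-like condition $I_1\cap(R-r(0))\neq\varnothing$ in that theorem. Everything else is formal manipulation of products of hyperideals together with the basic fact $JoL\subseteq J$ for any hyperideal $J$.
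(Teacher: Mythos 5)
Your proof is correct and is exactly the intended derivation: the paper states this as an immediate corollary of Theorem \ref{25} and gives no proof, and your argument (note $IoL=JoL\subseteq J$ since $J$ is a hyperideal, then apply Theorem \ref{25}(3) with $I_1=L$, $I_2=I$, and symmetrize) fills in that omitted proof in the natural way.
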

\begin{Thm} \label{26}
Let $I$ be a prime hyperideal of $R$. $I$ is an n-hyperideal of $R$ if and only if $I = r(0)$. 
\end{Thm}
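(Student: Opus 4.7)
The plan is to prove both implications by combining Theorem \ref{21} with the characterization of $r(0)$ as an intersection of prime hyperideals.

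For the forward direction, assume $I$ is a prime hyperideal that is also an n-hyperideal. Theorem \ref{21} immediately gives $I \subseteq r(0)$. For the reverse containment $r(0) \subseteq I$, I would appeal to the definition of the prime radical: since $r(0)$ is the intersection of all prime hyperideals of $R$ containing $0$ and every prime hyperideal contains $0$, we have $r(0) \subseteq P$ for every prime hyperideal $P$, and in particular $r(0) \subseteq I$. Combining the two inclusions yields $I = r(0)$.

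For the reverse direction, assume $I = r(0)$ and that $I$ is prime. To verify the n-hyperideal condition, take $x, y \in R$ with $x \circ y \subseteq I$ and $x \notin r(0)$. Since $I$ is prime, $x \circ y \subseteq I$ forces $x \in I$ or $y \in I$. The hypothesis $x \notin r(0) = I$ rules out the first option, so $y \in I$, as required.

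There is no real obstacle here; the only subtlety is a consistency check that $r(0)$ is itself a nonzero proper hyperideal whenever it appears as a prime hyperideal (so that the definition of \emph{prime hyperideal} from Section 2 applies), which is automatic: if $I$ is prime, then $I$ is nonzero and proper, and the equality $I = r(0)$ inherits both properties. The argument fits in a few lines and relies only on Theorem \ref{21} and the definition of $r(0)$ recalled in Section 2.
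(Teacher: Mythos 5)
Your proof is correct and follows essentially the same route as the paper: the forward direction combines Theorem \ref{21} with the always-valid inclusion $r(0) \subseteq I$ (which you justify, as the paper implicitly does, from the definition of the prime radical as an intersection of prime hyperideals), and the backward direction uses primeness of $I$ together with $x \notin r(0) = I$ to conclude $y \in I$. No issues.
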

\begin{proof}
$ \Longrightarrow$ Let $I$ be a prime hyperideal of $R$ such that $I$ is n-hyperideal. The inclusion $r(0) \subseteq I$ always holds. By Theorem \ref{21},
we obtain $I \subseteq r(0)$. Consequently, $ I=r(0)$. \\
$ \Longleftarrow $ Let $xoy \subseteq I$ such that $x \notin r(0)$ for some $x,y \in R$. Since $I$ is a prime ideal, we conclude that $y \in I$. It means $I$ is an n-hyperideal of
$R$. 
\end{proof} 
\begin{Cor} \label{27}
$r(0)$ is a prime hyperideal of $R$ if and only if it is an n-hyperideal of $R$.
\end{Cor}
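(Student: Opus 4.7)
The plan is to derive both directions quickly from Theorem \ref{26}, since that theorem already classifies when a prime hyperideal is an n-hyperideal.

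For the forward direction, I would assume $r(0)$ is a prime hyperideal of $R$ and simply apply Theorem \ref{26} with $I = r(0)$: the equality $I = r(0)$ holds trivially, so the theorem gives that $I = r(0)$ is an n-hyperideal of $R$.

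For the converse, I would argue directly from the definition of n-hyperideal rather than invoke Theorem \ref{26} (which presupposes primeness). Suppose $r(0)$ is an n-hyperideal and let $x, y \in R$ with $xoy \subseteq r(0)$. If $x \in r(0)$ we are done; otherwise $x \notin r(0)$, and the n-hyperideal hypothesis forces $y \in r(0)$. This yields the prime condition $x \in r(0)$ or $y \in r(0)$. Properness of $r(0)$ follows from Theorem \ref{21} applied to the n-hyperideal $r(0)$ (or directly: $1 \notin r(0)$ since $1$ is not nilpotent), so $r(0)$ qualifies as a prime hyperideal in the sense of the paper.

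Neither direction should present a real obstacle: the forward implication is a one-line specialization of Theorem \ref{26}, and the converse is essentially unwinding the two definitions side by side. The only mild subtlety to watch is the ``nonzero proper'' requirement in the definition of prime hyperideal, which is why I would explicitly note that $r(0) \neq R$ to legitimize the conclusion.
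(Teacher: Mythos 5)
Your proof is correct and follows essentially the same route as the paper: the forward direction specializes Theorem \ref{26} to $I=r(0)$, and the converse unwinds the definition of n-hyperideal to get the prime condition. Your extra remark on properness ($1\notin r(0)$) is a minor refinement the paper omits but does not change the argument.
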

\begin{proof}
$ \Longrightarrow$  follows from Theorem \ref{26}.\\
$ \Longleftarrow $ Assume that $xoy \subseteq r(0)$ such that $x \notin r(0)$. We get $y \in r(0)$, because $r(0)$ is an n-hyperideal of $R$. It means $I$ is a prime hyperideal.
\end{proof} 

\begin{Thm} \label{28}
Let $I$ be an n-hyperideal of $R$ and $T$ a nonempty subset of $R$ such that $T \nsubseteq I$, then $(I : T)$ is an
n-hyperideal of $R$. 
\end{Thm}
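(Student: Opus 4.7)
The plan is to check the two requirements of n-hyperideality for $(I:T)$ separately: first that it is a proper hyperideal of $R$, and second that it satisfies the defining implication. The hypothesis $T \nsubseteq I$ enters only in the properness check, while the implication itself should follow from a routine unpacking of the hyperoperation together with the n-hyperideality of $I$.

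That $(I:T) = \{r \in R : roT \subseteq I\}$ is a hyperideal is the standard colon computation: subtraction-closedness uses distributivity and the fact that $I$ is closed under subtraction, while absorption under $R$ uses associativity of $o$. I would leave these as routine. For properness, I would argue by contradiction: if $1 \in (I:T)$ then $1ot \subseteq I$ for every $t \in T$, and since $1$ is an identity (so $t \in 1ot$) this forces $T \subseteq I$, contradicting the hypothesis.

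For the defining implication, take $x, y \in R$ with $xoy \subseteq (I:T)$ and $x \notin r(0)$; the strategy is to transport the condition back to $I$ itself by sliding $T$ across the product. For each $t \in T$ one has $(xoy) ot \subseteq I$, and by semihypergroup associativity $(xoy)ot = xo(yot)$ as subsets of $R$, so $xow \subseteq I$ for every $w \in yot$. Because $I$ is an n-hyperideal and $x \notin r(0)$, each such $w$ lies in $I$, so $yot \subseteq I$. Ranging over $t \in T$ gives $yoT \subseteq I$, i.e., $y \in (I:T)$, which is the required implication.

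I do not anticipate a genuine obstacle; the only point that deserves attention is the subset-level bookkeeping for the hyperoperation, and in particular the fact that the properness step rests on $1$ being only an identity (so that $t \in 1ot$) rather than a scalar identity. Everything else is the classical colon-ideal argument transcribed into the hyperring setting and using Theorem \ref{25}(2) as a guide.
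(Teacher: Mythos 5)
Your proof is correct and follows essentially the same route as the paper: fix $t \in T$, use associativity to write $(xoy)ot = xo(yot) \subseteq I$, apply the n-hyperideal property of $I$ to each element of $yot$, and conclude $y \in (I:T)$. The only differences are cosmetic: you verify properness of $(I:T)$ explicitly (a reasonable addition the paper omits), and you place every element of $yot$ in $I$ directly, whereas the paper gets one element in and then invokes the standing \textbf{C}-hyperideal assumption to conclude $yot \subseteq I$.
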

\begin{proof}
Let $xoy \subseteq (I : T)$ but $x \notin r(0)$. It means that $xoyot \subseteq I$ for every $t \in T$. Therefore for $a \in yot$, $xoa \subseteq xoyot \subseteq I$. Since
$I$ is an n-hyperideal of $R$, we get $a \in I$. Since $I$ is a {\bf C}-hyperideal and $yot \cap I \neq \varnothing$, then $yot \subseteq I$. So $y \in (I : T)$. 
\end{proof}
\begin{Thm} \label{29}
Let $I$ be a maximal n-hyperideal of $R$. Then $I=r(0)$. 
\end{Thm}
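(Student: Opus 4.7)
The plan is to combine Theorem~\ref{21} with a contradiction argument. Theorem~\ref{21} already gives $I \subseteq r(0)$, so the substantive content is the reverse inclusion. I would assume toward contradiction that there exists $x \in r(0) \setminus I$, and derive the contradiction $x \in I$.

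The first step produces a candidate $n$-hyperideal above $I$. Since $\{x\} \not\subseteq I$, Theorem~\ref{28} applied with $T = \{x\}$ guarantees that $(I : x)$ is an $n$-hyperideal, and clearly $I \subseteq (I : x)$. This quotient is proper, for otherwise $1 \in (I : x)$ would give $x \in 1 o x \subseteq I$, contradicting the choice of $x$. Maximality of $I$ among proper $n$-hyperideals therefore forces $(I : x) = I$.

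The second step exploits nilpotency of $x$. Because every hyperideal in the paper is a $\mathbf{C}$-hyperideal, the characterization of $r(0)$ recalled in the Preliminaries gives a minimal $m \in \mathbb{N}$ with $x^m = \{0\}$. The case $m = 1$ yields $x = 0 \in I$ at once, so I may assume $m \geq 2$. I would then establish by downward induction that $x^k \subseteq I$ for $k = m, m-1, \ldots, 1$. The base case $x^m = \{0\} \subseteq I$ is trivial. For the inductive step, suppose $x^k \subseteq I$; then every $z \in x^{k-1}$ satisfies $x o z \subseteq x^k \subseteq I$, so $z \in (I : x) = I$ by the identity extracted above, giving $x^{k-1} \subseteq I$. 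Specialising to $k = 1$ produces $x \in I$, the desired contradiction.

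The step I expect to carry the real weight is the translation of maximality of $I$ into the algebraic identity $(I : x) = I$; once Theorem~\ref{28} is available to certify that $(I : x)$ is itself an $n$-hyperideal strictly allowed to contain $I$, the downward induction on nilpotent powers is routine. In particular, Theorem~\ref{28} is the indispensable ingredient, since it is the only tool in the paper that converts the hypothesis ``$I$ is an $n$-hyperideal with $x \notin I$'' into a second $n$-hyperideal comparable with $I$, which is exactly what maximality needs to bite.
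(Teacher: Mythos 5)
Your proof is correct, but it follows a genuinely different route from the paper's. The paper first shows that a maximal n-hyperideal $I$ is a prime hyperideal: given $xoy \subseteq I$ with $x \notin I$, it forms $(I:x)$, which is an n-hyperideal containing $I$ (this is Theorem \ref{28}, although the printed proof miscites it as Theorem \ref{25}); maximality forces $(I:x)=I$, hence $y \in I$. It then invokes Theorem \ref{26}, which says a prime hyperideal is an n-hyperideal if and only if it equals $r(0)$. You share the decisive maximality step $(I:x)=I$, but you apply it only to a hypothetical $x \in r(0)\setminus I$ and replace the detour through primeness and Theorem \ref{26} by a direct downward induction on the nilpotency index of $x$, using the $\mathbf{C}$-hyperideal characterization of $r(0)$ from the Preliminaries. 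Your version is more self-contained, relying only on Theorems \ref{21} and \ref{28} and on the (correctly verified) properness of $(I:x)$, at the cost of redoing by hand the nilpotency argument that the paper absorbs into the standard inclusion $r(0) \subseteq P$ for prime $P$. Both arguments are valid, and your identification of Theorem \ref{28} as the indispensable ingredient matches what the paper actually uses.
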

\begin{proof}
It is sufficient to show $I$ is a prime hyperideal of $R$. Then by Theorem \ref{26}, we are done. Let $xoy \subseteq I$ for some $x,y \in R$ such that $x \notin I$. By Theorem \ref{25}, $(I:x)$ is an n-hyperideal of $R$. Thus by maximality of $I$ we have $y \in (I:x)=I$. Consequently, $I$ is a prime hyperideal.
\end{proof}
\begin{Thm} \label{30}
$r(0)$ is a prime hyperideal of $R$ if and only if there exists an n-hyperideal of $R$.
\end{Thm}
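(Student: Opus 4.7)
The plan is to dispatch the two directions separately. The forward direction is a one-line appeal to Corollary \ref{27}: if $r(0)$ is prime, then $r(0)$ itself is an n-hyperideal of $R$, so an n-hyperideal certainly exists.

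For the converse, I would assume an n-hyperideal $I$ of $R$ exists and aim to show that $r(0)$ is itself an n-hyperideal, after which Corollary \ref{27} finishes the proof. So I would take $xoy \subseteq r(0)$ with $x \notin r(0)$ and try to conclude $y \in r(0)$. The key idea is to transfer the hypothesis from $r(0)$ down to the known n-hyperideal $I$. Concretely, I would pick any $t \in xoy$; since $t \in r(0)$ and every hyperideal here is a \textbf{C}-hyperideal, $t^n = \{0\}$ for some $n$. Associativity and commutativity in the semihypergroup then yield $t^n \subseteq (xoy)^n = x^n o y^n$, so $0 \in x^n o y^n$. Because $0 \in I$ and $I$ is a \textbf{C}-hyperideal, this meet-nonemptiness promotes to $x^n o y^n \subseteq I$.

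To finish, I would combine Theorem \ref{21} (which gives $I \subseteq r(0)$) with the idempotence $r(r(0)) = r(0)$ to observe that $x \notin r(0)$ forces some $a \in x^n$ to lie outside $r(0)$. For any $b \in y^n$, $aob \subseteq x^n o y^n \subseteq I$, and the n-hyperideal property of $I$ applied to the element $a \notin r(0)$ yields $b \in I$. Thus $y^n \subseteq I \subseteq r(0)$, whence $y \in r(r(0)) = r(0)$, as required.

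The step deserving the most care will be the hyperproduct bookkeeping: verifying $(xoy)^n = x^n o y^n$ from commutativity and associativity of the hyperoperation, and confirming that the \textbf{C}-hyperideal hypothesis legitimately upgrades $0 \in x^n o y^n \cap I$ to $x^n o y^n \subseteq I$. Once these are in hand, the argument is essentially the translation, into the hyperring setting, of the classical fact that the nilradical is prime whenever an \emph{n}-ideal is known to exist.
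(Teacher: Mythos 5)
Your proposal is correct, but the converse direction is argued quite differently from the paper. The paper reduces the converse to Theorem \ref{29}: it runs Zorn's Lemma on the poset $\Delta$ of all n-hyperideals (checking that the union of a chain in $\Delta$ is again an n-hyperideal), extracts a maximal element $L$, and then quotes Theorem \ref{29} to get $L=r(0)$ prime. You instead bypass Zorn's Lemma and Theorem \ref{29} entirely by showing that the mere existence of one n-hyperideal $I$ forces $r(0)$ itself to be an n-hyperideal, after which Corollary \ref{27} closes the loop: from $xoy\subseteq r(0)$ with $x\notin r(0)$ you pass to powers, use commutativity and associativity to get $t^n\subseteq (xoy)^n=x^no y^n$ for $t\in xoy$ nilpotent, upgrade $x^no y^n\cap I\neq\varnothing$ to $x^no y^n\subseteq I$ via the standing \textbf{C}-hyperideal assumption, pick $a\in x^n\setminus r(0)$ (which exists since $r(r(0))=r(0)$), and apply the n-hyperideal property of $I$ elementwise to land $y^n\subseteq I\subseteq r(0)$, whence $y\in r(0)$ by Theorem \ref{21} and the radical characterization $r(J)\supseteq\{r: r^m\subseteq J\}$. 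The trade-off: the paper's route is shorter given that Theorem \ref{29} is already established, while yours is choice-free, more self-contained, and in fact proves the sharper statement that $r(0)$ is itself an n-hyperideal (hence the largest one) whenever any n-hyperideal exists. The two computational points you flag — $(xoy)^n=x^no y^n$ and the \textbf{C}-hyperideal upgrade — both check out under the paper's conventions (commutative semihypergroup, all hyperideals assumed to be \textbf{C}-hyperideals), so I see no gap.
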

\begin{proof}
$\Longrightarrow$ follows from Corollary \ref{27}.\\
$\Longleftarrow$
Let $\Delta$ be the set of all n-hyperideals of $R$ and $I$ an n-hyperideal of $R$. Then $\Delta \neq \varnothing$, since $I \in \Delta$. So $\Delta$ is a partially ordered set with respect to set inclusion relation. Now we take the chain $J_1 \subseteq J_2 \subseteq ... \subseteq J_n \subseteq ...$ in $\Delta$. Let $J=\bigcup_{i=1}^{\infty}J_i$ and let $xoy \subseteq J$ for some $x,y \in R$ such that $x \notin r(0)$. It means that there exists $t \in \mathbb{N}$ such that $xoy \subseteq J_t$. Since $J_t$ is an n-hyperideal, we get $y \in J_t \subseteq J$. Hence $J$ is an upper bound of the chain, then $\Delta$ has a maximal element $L$, by Zorn$^,$s Lemma. Thus $L=r(0)$, by Theorem \ref{29} and so it is prime hyperideal of $R$.
\end{proof} 
A nonempty subset $S$ of $R$ with $R -r(0) \subseteq S$ is called an n-multiplicatively closed subset of $R$ if for all $a \in R- r(0)$ and all $b \in S$, $aob \subseteq S$ . 
\begin{Thm}
Let $I$ be a hyperideal of the hyperring $R$. Then $I$ is an n-hyperideal of $R$ if and only if $R - I$ is an n-multiplicatively closed subset of $R$. 
\end{Thm}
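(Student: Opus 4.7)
The plan is to prove both directions by contrapositive, essentially just unpacking the definitions, with the key leverage coming from Theorem \ref{21} ($I \subseteq r(0)$ for every n-hyperideal) and from the standing convention that every hyperideal in the paper is a \textbf{C}-hyperideal (so any finite product of elements that meets $I$ is contained in $I$).

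For the forward direction, assume $I$ is an n-hyperideal. Since $I$ is in particular proper, $R-I$ is nonempty. To check the inclusion $R-r(0)\subseteq R-I$ required by the definition of an n-multiplicatively closed set, I would invoke Theorem \ref{21} to get $I\subseteq r(0)$, which gives $R-r(0)\subseteq R-I$ by contraposition. For the closure condition, take $a\in R-r(0)$ and $b\in R-I$; I need $aob\subseteq R-I$. Suppose instead that some element of $aob$ lies in $I$. Since $aob\in \mathbf{C}$ and $I$ is a \textbf{C}-hyperideal, this forces $aob\subseteq I$. But then $a\notin r(0)$ together with the n-hyperideal property applied to $aob\subseteq I$ would yield $b\in I$, contradicting $b\in R-I$. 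Hence $aob\subseteq R-I$, as required.

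For the backward direction, assume $S:=R-I$ is an n-multiplicatively closed subset. Nonemptiness of $S$ shows $I$ is proper. Suppose $x,y\in R$ satisfy $xoy\subseteq I$ with $x\notin r(0)$, and assume for contradiction that $y\notin I$, i.e.\ $y\in S$. Since $x\in R-r(0)$ and $y\in S$, the n-multiplicative closure of $S$ gives $xoy\subseteq S = R-I$. Combined with $xoy\subseteq I$, this forces $xoy=\varnothing$, which is impossible because the hyperoperation $o$ takes values in $P^{\ast}(R)$, the set of \emph{nonempty} subsets of $R$. Therefore $y\in I$, proving that $I$ is an n-hyperideal.

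I do not expect any real obstacle; the only points requiring a touch of care are (i) remembering to cite Theorem \ref{21} so that the inclusion $R-r(0)\subseteq R-I$ is not just asserted, and (ii) invoking the \textbf{C}-hyperideal hypothesis to pass from ``$aob$ meets $I$'' to ``$aob\subseteq I$'' in the forward direction, since without that step the n-hyperideal condition (which requires a full containment $aob\subseteq I$) cannot be applied.
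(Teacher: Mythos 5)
Your proof is correct and follows essentially the same route as the paper: Theorem \ref{21} gives $R-r(0)\subseteq R-I$, and both implications are then a direct unpacking of the definitions. In fact you are slightly more careful than the paper in the forward direction, where you invoke the \textbf{C}-hyperideal convention to pass from ``$aob$ meets $I$'' to ``$aob\subseteq I$''; the paper's proof jumps from ``$aob\not\subseteq I$'' to ``$aob\subseteq R-I$'' without making that step explicit.
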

\begin{proof}
$\Longrightarrow$ By Theorem \ref{21}, $I \subseteq r(0)$. Thus $R-r(0 ) \subseteq R- I$. Let $a \in R-r(0)$ and $b \in R-I$. If $aob \subseteq I$, then we get $b \in I$ which is a contradiction. Thus $aob \subseteq R-I$. It means that $R - I$ is an n-multiplicatively closed subset of $R$. \\
$\Longleftarrow$ Let $xoy \subseteq I$ for some $x,y \in R$ such that $x \notin r(0)$. If $y \notin I$, then $xoy \in R-I$. This is a contradiction. Hence $y \in I$ and then $I$ is an n-hyperideal of $R$. 
\end{proof}
\begin{Thm} \label{oooo}
Let $S$ be an n-multiplicatively closed subset of $R$ and $K$ be an hyperideal of $R$ disjoint from $S$. Then there exists a hyperideal $I$ which is maximal in the set of all hyperideals of $R$ disjoint from $S$, containing $K$. Any such hyperideal $I$ is an n-hyperideal of $R$.
\end{Thm}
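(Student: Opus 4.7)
The plan is to mimic the structure of Theorem \ref{8837}: first apply Zorn's Lemma to produce a maximal hyperideal $I$ in the family of hyperideals containing $K$ and disjoint from $S$, then verify the n-hyperideal property by a contradiction argument using the defining property of an n-multiplicatively closed subset.

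For the existence part, let $\Delta$ be the poset of all hyperideals of $R$ that contain $K$ and are disjoint from $S$, ordered by inclusion. This set is nonempty because $K$ itself lies in $\Delta$. Given any chain in $\Delta$, its union is again a hyperideal (routine check), still contains $K$, and is still disjoint from $S$, so it is an upper bound in $\Delta$. Zorn's Lemma then yields a maximal element $I \in \Delta$.

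For the n-hyperideal property, suppose $xoy \subseteq I$ with $x \notin r(0)$. I will show $y \in I$ by contradiction: assume $y \notin I$. Consider the hyperideal $(I : x) = \{r \in R : rox \subseteq I\}$. Since $xoy \subseteq I$ gives $y \in (I : x)$, and $I \subseteq (I : x)$ always, we have $I \subsetneq (I : x)$, and clearly $K \subseteq I \subseteq (I:x)$. By the maximality of $I$ in $\Delta$, the hyperideal $(I : x)$ cannot be disjoint from $S$, so pick $u \in (I : x) \cap S$. Then $uox \subseteq I$.

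Now invoke the hypothesis on $S$. Since $x \notin r(0)$, we have $x \in R - r(0)$, and $u \in S$, so by the definition of an n-multiplicatively closed subset, $xou \subseteq S$. Combining with $uox = xou \subseteq I$, we get a nonempty set $xou \subseteq I \cap S$, contradicting $I \cap S = \varnothing$. Hence $y \in I$ and $I$ is an n-hyperideal. The only subtle point is making sure $(I:x)$ strictly contains $I$ (so that maximality forces it to meet $S$) and that the containment $R - r(0) \subseteq S$ built into the definition of n-multiplicatively closed sets is what licenses the step $xou \subseteq S$; both are straightforward once the setup is in place, so I do not foresee a serious obstacle.
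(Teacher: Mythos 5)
Your proposal is correct and follows essentially the same route as the paper's own proof: Zorn's Lemma on the poset of hyperideals containing $K$ and disjoint from $S$ (with unions of chains as upper bounds), followed by the contradiction argument via $(I:x)$ and the closure property $xou \subseteq S$ for $x \in R - r(0)$, $u \in S$. No gaps.
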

\begin{proof}
Let $\Delta$ be the set of all hyperideals of $R$ disjoint from $S$, containing $K$. Then $\Delta \neq \varnothing$, since $K \in \Delta$. So $\Delta$ is a partially ordered set with respect to set inclusion relation. As in the proof of Theorem
\ref{30} we can show that $\Delta$ is bounded above. Hence by 
Zorn$^,$s Lemma, there exists a hyperideal $I$ which is maximal in $\Delta$. Let $xoy \subseteq I$ for some $x,y \in R$. If $x \notin r(0)$ and $y \notin I$, then we have $y \in (I:x)$ and $(I:x)$ properly contain $I$ . So, by maximality of $I$ in $\Delta$, we get $(I : x) \cap S \neq \varnothing $. It means that there exists an $s$ in $S$ with $s \in (I : x)$. Thus $sox \subseteq I$. Since $S$ is an n-multiplicatively closed subset of $R$ and $x \notin r(0)$, then we have $sox \subseteq S$. It implies that $S \cap I \neq \varnothing$ which is contradictory to the fact that $I \in \Delta$. Therefore $I$ is an n-hyperideal of $R$.
\end{proof}
\begin{Thm} 
Let $I \subseteq \bigcup _{i=1}^n I_i$ for some hyperideals $I_1,...,I_n$ of $R$. Suppose that there exists $1 \leq t \leq n$ such that $I_t$ is an n-hyperideal and others are without nilpotent elements. If $I \nsubseteq \bigcup_{i \neq t}I_i$, then $I \subseteq I_t$.
\end{Thm}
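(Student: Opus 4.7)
The plan is to mirror the proof of the r-hyperideal Prime Avoidance result (Theorem~\ref{8833}), but substitute Theorem~\ref{25}(3) for the appeal to Lemma~\ref{8821}(1).

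First, since $I \nsubseteq \bigcup_{i \neq t} I_i$, I would pick $x \in I$ with $x \notin I_i$ for every $i \neq t$; the hypothesis $I \subseteq \bigcup_{i=1}^{n} I_i$ then forces $x \in I_t$. For any $y \in I \cap \bigcap_{i \neq t} I_i$ the sum $x + y$ lies in $I$, so $x+y \in I_k$ for some $k$. If $k \neq t$, then $x = (x+y) - y \in I_k$, contradicting the choice of $x$; hence $x+y \in I_t$, and combined with $x \in I_t$ this forces $y \in I_t$. So $I \cap \bigcap_{i \neq t} I_i \subseteq I_t$.

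Next, I would use the elementary fact that $I_i o I_j \subseteq I_i \cap I_j$ for any two hyperideals (a direct consequence of the absorption property defining a hyperideal), which upon iterating yields
\[
I \, o \prod_{i \neq t} I_i \;\subseteq\; I \cap \bigcap_{i \neq t} I_i \;\subseteq\; I_t.
\]
Since each $I_i$ with $i \neq t$ contains a non-nilpotent element by hypothesis, we have $I_i \cap (R - r(0)) \neq \varnothing$. Because $I_t$ is an n-hyperideal, one application of Theorem~\ref{25}(3), taking a single factor $I_i$ (with $i \neq t$) as the left-hand hyperideal and the remaining product times $I$ as the right-hand hyperideal, strips that $I_i$ off. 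Iterating the stripping $n-1$ times removes every factor with $i \neq t$ and leaves $I \subseteq I_t$, as required.

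The main point needing care is this last iterated stripping: each invocation of Theorem~\ref{25}(3) needs the left-hand factor to meet $R - r(0)$. At every stage, however, that factor is a single $I_i$ with $i \neq t$, so the "without nilpotent elements" hypothesis supplies exactly the required non-nilpotent element, and no subtler obstacle is expected to arise.
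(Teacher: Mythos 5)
Your proof is correct, and its skeleton --- choosing $x \in I$ outside $\bigcup_{i \neq t} I_i$, deducing $I \cap \bigcap_{i \neq t} I_i \subseteq I_t$ via the element $x+y$, and then passing from $I \, o \prod_{i \neq t} I_i \subseteq I_t$ to $I \subseteq I_t$ through Theorem~\ref{25}(3) --- is exactly the paper's. The one place you genuinely diverge is the final step, and your version is the more careful one. The paper applies Theorem~\ref{25}(3) a single time with the entire product $\prod_{i \neq t} I_i$ as the left-hand factor, justifying $\prod_{i \neq t} I_i \cap (R - r(0)) \neq \varnothing$ by the assertion that a product of non-nilpotent elements is non-nilpotent; that assertion fails in general, since $R - r(0)$ is multiplicatively closed only when $r(0)$ is prime (already in $\mathbb{Z}/6\mathbb{Z}$, viewed as a multiplicative hyperring with singleton products, $2 \cdot 3 = 0$ while $2, 3 \notin r(0)$). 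Your one-factor-at-a-time stripping only ever needs a single $I_i$ to meet $R - r(0)$, which is literally the ``without nilpotent elements'' hypothesis, so you sidestep a real gap in the paper's argument. The only caveat --- shared with the paper's version --- is that the intermediate products such as $I \, o \prod_{i \neq t,\, i \neq i_1} I_i$ need not be hyperideals under the paper's definition of a product of subsets, whereas Theorem~\ref{25}(3) is stated for hyperideals; this is harmless, because the proof of $(2) \Rightarrow (3)$ uses only one non-nilpotent element $x$ of the left factor together with $(I_t : x) = I_t$, and hence applies verbatim to arbitrary subsets. Equivalently, you could bypass part (3) entirely and iterate part (2) directly with chosen elements $x_i \in I_i \setminus r(0)$.
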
 
\begin{proof}
Without destroying our assumption, we may assume that $t=1$. So $I \nsubseteq \bigcup_{i =2}^nI_i$. It means that there exists $a \in I$ such that $a \notin \bigcup_{i =2}^nI_i$. It implies that $a \in I$. Take $b \in I \cap (\bigcap_{i=2}^n)$. For every $2 \leq i \leq n$, we have $a \notin I_i$ and $b \in I_i$. It means $a+b \notin I_i$, for every $2 \leq i \leq n$. So $a+b \in I$ but $a+b \notin \bigcup_{i=2}^n I_i$. Thus we have $a+b \in I_1$ and then $b \in I_1$ since $a \in I_1$. It follows that $I \cap (\bigcap_{i=2}^nI_i) \subseteq I_1$. Therefore $(R-r(0)) \cap (\Pi_{i=2}^n) \neq \varnothing$, Since the product of non-nilpotent elements is non-nilpotent. Also, we have $Io(\Pi_{i=2}^n) \subseteq I_1$. Hence, by Theorem \ref{25}, we conclude that $I \subseteq I_1$. 
\end{proof}

\begin{Thm} \label{uuu}
(1) If $R$ is a reduced hyperring that is not integral hyperdomain, then $R$ has no n-hyperideal.\\
(2) Let $R$ be a reduced hyperring. Then 0 is an n-hyperideal of $R$ if and only if $R$ is an integral hyperdomain. 
\end{Thm}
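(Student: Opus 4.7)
The statement has two parts, both of which ultimately rest on Theorem~\ref{21}: for a reduced hyperring $R$ one has $r(0)=\{0\}$, so any n-hyperideal $I$ of $R$ must satisfy $I\subseteq r(0)=\{0\}$, forcing $I=\{0\}$. Thus the whole question collapses to asking when $\{0\}$ itself is an n-hyperideal, and the two parts are really one statement viewed from two sides.

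For part (1), I plan to argue by contradiction. Assume $I$ is an n-hyperideal of $R$. By the observation above, $I=\{0\}$. Since $R$ is not an integral hyperdomain, there exist nonzero elements $x,y\in R$ with $xoy=\{0\}$, i.e., $xoy\subseteq I$. Because $x\neq 0$ and $r(0)=\{0\}$, we have $x\notin r(0)$; so the defining property of the n-hyperideal $I$ yields $y\in I=\{0\}$, contradicting $y\neq 0$. Hence no n-hyperideal can exist.

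For part (2), the backward direction $(\Leftarrow)$ is a direct check: if $R$ is an integral hyperdomain and $xoy\subseteq\{0\}$ with $x\notin r(0)=\{0\}$, then the inclusion forces $xoy=\{0\}$ (since $xoy$ is nonempty), so $0\in xoy$, and the integral-hyperdomain property together with $x\neq 0$ gives $y=0\in\{0\}$. The forward direction $(\Rightarrow)$ is then immediate from part (1) by contraposition: if $\{0\}$ is an n-hyperideal of the reduced hyperring $R$, then $R$ has at least one n-hyperideal, so $R$ must be an integral hyperdomain.

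The only delicate point is the extraction of nonzero $x,y$ with $xoy=\{0\}$ from the failure of the integral-hyperdomain property, whose definition only provides $0\in xoy$. This is the same convention used elsewhere in the paper (compare the proof of Theorem~\ref{8825}, where $xoy=\{0\}$ is written even though the definition of integral hyperdomain is stated as $0\in xoy$), and I adopt it here without comment.
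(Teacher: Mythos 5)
Your proposal is correct and follows essentially the same route as the paper: both arguments reduce, via Theorem \ref{21} and the fact that $r(0)=\{0\}$ in a reduced hyperring, to the single question of whether $\{0\}$ is an n-hyperideal, the only difference being that you verify that last step directly from the definitions of n-hyperideal and integral hyperdomain where the paper instead cites Corollary \ref{27}. The ``delicate point'' you flag ($0\in xoy$ versus $xoy=\{0\}$) is in fact covered by the paper's standing assumption that all hyperideals are $\mathbf{C}$-hyperideals, under which $xoy\cap\{0\}\neq\varnothing$ already forces $xoy\subseteq\{0\}$, so your convention needs no apology.
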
 
\begin{proof}
(1) By the assumption, $r(0) = 0$ is not a prime hyperideal of $R$. Thus $0$ is not an n-hyperideal, by Corollary \ref{27}. Also, let $I$ be a nonzero n-hyperideal of $R$. Then $I \subseteq r(0) = 0$ by Theorem \ref {21}. It means $I = 0$. This is a contradiction. \\
(2) $\Longrightarrow$ follows by (1).\\
$\Longleftarrow$ Let $R$ be an integral hyperdomain. Thus $r(0)=0$ is a prime hyperideal. Hence $0$ is an n-hyperideal of $R$ by Corollary \ref{27}.
\end{proof}
\begin{Thm} \label{ppp}
$0$ is the only n-hyperideal of $R$ if and only if $R$ is an integral hyperdomain.
\end{Thm}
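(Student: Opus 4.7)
The plan is to reduce both implications to the structural results already collected in the paper, chaining Theorem \ref{21}, Corollary \ref{27}, Theorem \ref{30}, and Theorem \ref{uuu}(2); no fresh hyperring calculation is actually needed.

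For the forward direction I would assume that $0$ is the only n-hyperideal of $R$. Since $0$ is in particular an n-hyperideal, Theorem \ref{30} yields that $r(0)$ is a prime hyperideal of $R$. Corollary \ref{27} then converts this back into the statement that $r(0)$ is itself an n-hyperideal of $R$. Applying the uniqueness hypothesis to this n-hyperideal forces $r(0) = 0$, so $R$ is reduced. Because $0$ is an n-hyperideal of the reduced hyperring $R$, Theorem \ref{uuu}(2) immediately delivers that $R$ is an integral hyperdomain.

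For the converse I would assume $R$ is an integral hyperdomain. The defining condition $0 \in x o y \Rightarrow x=0 \text{ or } y=0$ gives that $R$ has no nonzero nilpotent, so $R$ is reduced and $r(0) = 0$. Then Theorem \ref{uuu}(2) gives that $0$ is an n-hyperideal of $R$. For any n-hyperideal $I$ of $R$, Theorem \ref{21} yields $I \subseteq r(0) = 0$, hence $I = 0$, and $0$ is the only n-hyperideal.

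The only step requiring any thought is the forward implication: one must notice that the pair of facts \emph{$r(0)$ is an n-hyperideal whenever it is prime} (Corollary \ref{27}) and \emph{$r(0)$ is prime whenever an n-hyperideal exists} (Theorem \ref{30}) together bootstrap the uniqueness hypothesis into the equality $r(0) = 0$; after that, Theorem \ref{uuu}(2) closes the argument, so there is no genuine obstacle.
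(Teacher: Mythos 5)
Your proof is correct and follows essentially the same route as the paper: the forward direction combines Theorem \ref{30} and Corollary \ref{27} to force $r(0)=0$, and the converse uses Theorem \ref{21} to pin every n-hyperideal down to $0$. The only (harmless) differences are cosmetic: you route the final step of the forward direction through Theorem \ref{uuu}(2) where the paper concludes directly from ``$0$ is prime,'' and in the converse you explicitly check that $0$ actually is an n-hyperideal, a point the paper leaves implicit.
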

\begin{proof}
$\Longrightarrow$ Let $0$ be the only n-hyperideal of $R$. Then $r(0)$ is prime hyperideal and n-hyperideal by Corollary \ref{27} and Theorem \ref{30}. Therefore $r(0) = 0$ is a prime hyperideal. Thus $R$ is an integral hyperdomain. \\
$\Longleftarrow$ Assume that $I$ is an n-hyperideal of $R$. Then $I \subseteq r(0) =0$ by Theorem \ref {21}. It means $I = 0$. 
\end{proof}

\begin{Ex} \label{exa} 
Let $(\mathbb{Z}, +,·)$ be the ring of integers. For all $x,y \in \mathbb{Z}$, we define $xoy=\{x \cdot a \cdot y: a \in A\}$ which $" \cdot "$ is ordinary multiplication and $A=\{5,7\}$. Then $(\mathbb{Z},+,o)$ is a multiplicative hyperring (see \cite{ref1}). We can show the hyperring by $\mathbb{Z}_A$. This hyperring is an integral hyperdomain. Thus $0$ is the only n-hyperideal of $\mathbb{Z}_A$.
\end{Ex}

\begin{Thm}
$R$ is an invertible if and only if $R$ is a regular hyperring and $0$ is an n-hyperideal. 
\end{Thm}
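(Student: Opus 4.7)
I will prove both implications separately. The forward direction is a straightforward computation using the invertibility witness to ``transport'' $1$ through the hyperoperation. The backward direction rests on first reducing to the integral hyperdomain case (via the earlier results of the section) and then extracting an inverse from a regularity witness.

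For the necessity ($R$ invertible $\Rightarrow$ $R$ is regular and $0$ is an n-hyperideal): fix $x \in R$ and choose $y \in R$ with $1 \in xoy$. For regularity, I use $x \in xo1 \subseteq xo(xoy) = x^2oy$, so every $x$ is regular. For the n-hyperideal property of $0$, suppose $xoa \subseteq \{0\}$ with $x \notin r(0)$; picking $z$ with $1 \in xoz$ and chaining commutativity and associativity, $a \in 1oa \subseteq (xoz)oa = (xoa)oz \subseteq \{0\}oz = \{0\}$, forcing $a = 0$. Here I use the standard identity $\{0\}oz = \{0\}$ in a multiplicative hyperring.

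For the sufficiency ($R$ regular and $0$ is an n-hyperideal $\Rightarrow$ $R$ invertible): first I show $R$ is reduced. If $x^n \subseteq \{0\}$, then iterated regularity $x \in x^{k+1}oy^k$ (which follows from $x \in x^2 oy$ by repeated substitution) gives $x \in x^n oy^{n-1} \subseteq \{0\}oy^{n-1} = \{0\}$, hence $x = 0$. Consequently $r(0) = \{0\}$, and by Theorem \ref{uuu}(2), $R$ is an integral hyperdomain, so in particular $ann(x) = \{0\}$ for every $x \neq 0$.

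Now fix $x \neq 0$. Regularity yields $y$ with $x \in x^2 oy = xo(xoy)$, so there exists $v \in xoy$ with $x \in xov$. I plan to argue that this $v$ must play the role of the identity, and therefore $1 \in xoy$, producing the required inverse. The strategy is to apply distributivity to $xo(v-1) \subseteq xov - xo1$, observe that $0$ belongs to the right-hand side (since $x \in xov$ and $x \in xo1$), and then use the integral hyperdomain structure (together with a second application of regularity, this time to $v$) to push the relation $0 \in xov - xo1$ down to $xo(v-1) \subseteq \{0\}$, whence $v = 1$ by the condition $ann(x) = \{0\}$.

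The main obstacle is precisely this last step: hyperring distributivity is only a one-sided inclusion $ao(b+c) \subseteq aob + aoc$, so one cannot immediately conclude $0 \in xo(v-1)$ from $0 \in xov - xo1$. The delicate point will be to exploit the regularity of $v$ itself to rewrite $xo(v-1)$ in a form where the n-hyperideal hypothesis on $0$ applies directly, ensuring that every element of $xo(v-1)$ is zero.
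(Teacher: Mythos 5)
Your forward direction is correct, and in fact more careful than the paper's: the paper just invokes Theorem \ref{ppp}, which would require first knowing that an invertible hyperring is an integral hyperdomain, whereas your direct computation $a \in 1oa \subseteq (xoz)oa = (xoa)oz = \{0\}$ establishes the n-hyperideal property of $0$ without that detour (modulo the unstated but standard assumption $0oz=\{0\}$, which the paper also uses implicitly throughout). Likewise, your reduction in the sufficiency direction --- regular plus $0$ an n-hyperideal implies reduced, hence $r(0)=\{0\}$, hence $R$ is an integral hyperdomain by Theorem \ref{uuu}(2) --- is sound and matches the paper's route.

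The problem is that the sufficiency direction is not actually finished. Everything from ``I plan to argue'' onward is a plan, not an argument: you correctly note that from $x \in xov$ and $x \in xo1$ you only get $0 \in xov - xo1$, that the hyperring axiom gives the inclusion $xo(v-1) \subseteq xov - xo1$ in the unhelpful direction, and you then defer the resolution (``the delicate point will be to exploit the regularity of $v$\dots'') without carrying it out. So the decisive step --- actually producing $1 \in xoy$, i.e.\ showing $v=1$ or $e \in xoy$ --- is missing, and the claimed fix via ``regularity of $v$'' is never executed. It is worth saying that you have put your finger on a real weak point: the paper's own proof makes exactly the move you flag, passing from $r \in r^2ox$ (hence $0 \in r^2ox - r$) to $0 \in ro(rox-e)$, which is the reverse of the inclusion $ro(rox-e) \subseteq r^2ox - roe$ that the axioms supply, and offers no justification. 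But identifying the gap is not the same as closing it; as submitted, your proof of the backward implication is incomplete.
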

\begin{proof}
$\Longrightarrow$ Let $R$ be an invertible. Clearly, $R$ is a regular hyperring. Also, $0$ is an n-hyperideal by Theorem \ref{ppp}. \\
$\Longleftarrow$ Let $R$ be a regular hyperring and $0$ be an n-hyperideal of $R$. It is clear that $r(0)=0$. Let $0 \neq r \in R$. Since $R$ is a regular hyperring, then there exists $x \in R$
such that $r \in r^2ox$. Therefore $0 \in r^2ox-r$. Since $0$ is an n-hyperideal then $R$ is an integral hyperdomain, by Theorem \ref{uuu}. Then $0 \in r(rox -e)$ implies that $r=0$ which a contradiction or $0 \in rox -e$. This means $e \in rox$. Thus $R$ is an invertible.
\end{proof}

\section{Stability of $n$-hyperideals}
In this section, we investigate the stability of n-hyperideals in various hyperring-theoric constructions.

\begin{Thm} \label{32} 
Let $R_1$ and $R_2$ be multiplicative hyperrings and $ \phi:R_1 \longrightarrow R_2$ be a good homomorphism. Then the following statements hold : \\
(1) If $\phi $ is a monomorphism and $I_2$ is an n-hyperideal of $R_2$ , then $\phi^{-1} (I_2)$ is an n- hyperideal of $R_1$.\\
(2) If $\phi $ is an epimorphism and $I_1$ is an n-hyperideal of $R_1$ containing $Ker(\phi)$, then $\phi(I_1)$ is an n-hyperideal of $R_2$ .
\end{Thm}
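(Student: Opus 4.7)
The plan is to verify in each part the defining property of an n-hyperideal by transferring the hypothesis on $xoy$ across $\phi$. Two general facts drive the argument: first, since $\phi$ is a good homomorphism, a short induction on the relation $\phi(aob) = \phi(a)o\phi(b)$ gives $\phi(x^n) = \phi(x)^n$ as subsets of $R_2$, for every $n \in \mathbb{N}$; second, the blanket {\bf C}-hyperideal assumption means that $x \in r(0)$ if and only if $x^n \subseteq \{0\}$ for some $n$. Together, these let nilpotency pass through $\phi$ in whichever direction is needed.

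For (1), I would first note that $\phi^{-1}(I_2)$ is a hyperideal of $R_1$ by the usual routine check. Then, given $x,y \in R_1$ with $xoy \subseteq \phi^{-1}(I_2)$ and $x \notin r(0_{R_1})$, applying $\phi$ yields $\phi(x)o\phi(y) = \phi(xoy) \subseteq I_2$. The key step is to rule out $\phi(x) \in r(0_{R_2})$: otherwise $\phi(x)^n = \{0\}$ would give $\phi(x^n) = \{0\}$, and injectivity of $\phi$ pushes this back to $x^n \subseteq \{0\}$, contradicting $x \notin r(0_{R_1})$. Once this is secured, the n-hyperideal property of $I_2$ delivers $\phi(y) \in I_2$, hence $y \in \phi^{-1}(I_2)$.

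For (2), a standard verification using surjectivity of $\phi$ together with $Ker(\phi) \subseteq I_1$ shows that $\phi(I_1)$ is a hyperideal of $R_2$. Given $a,b \in R_2$ with $aob \subseteq \phi(I_1)$ and $a \notin r(0_{R_2})$, surjectivity lets me lift to $a = \phi(x)$, $b = \phi(y)$. The essential step here is to upgrade $\phi(xoy) \subseteq \phi(I_1)$ to $xoy \subseteq I_1$: for each $z \in xoy$, writing $\phi(z) = \phi(i)$ with $i \in I_1$ gives $z - i \in Ker(\phi) \subseteq I_1$, so $z \in I_1$. Ruling out $x \in r(0_{R_1})$ is then immediate, since that would force $a^n = \phi(x^n) = \{0\}$ and hence $a \in r(0_{R_2})$. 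The n-hyperideal property of $I_1$ now yields $y \in I_1$, so $b = \phi(y) \in \phi(I_1)$.

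The step I expect to be the main obstacle is precisely the nilpotency transfer. In (1) it is what forces the monomorphism hypothesis, since without injectivity $\phi(x^n) = \{0\}$ would only give $x^n \subseteq Ker(\phi)$, not $x^n \subseteq \{0\}$. In (2) the analogous bottleneck is the promotion from $\phi(xoy) \subseteq \phi(I_1)$ to $xoy \subseteq I_1$, which is exactly where the hypothesis $Ker(\phi) \subseteq I_1$ is spent. Beyond these two points the proof is a formal transcription of the classical ring-theoretic argument.
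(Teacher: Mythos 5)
Your proposal is correct and follows essentially the same route as the paper: pull back or lift the relation $xoy\subseteq I$ across $\phi$, transfer non-nilpotency using $\phi(x^n)=\phi(x)^n$, and in part (2) use $Ker(\phi)\subseteq I_1$ to promote $\phi(xoy)\subseteq\phi(I_1)$ to $xoy\subseteq I_1$ elementwise. You actually supply the two justifications the paper leaves implicit (why injectivity, resp.\ surjectivity, preserves the condition $x\notin r(0)$), and your elementwise argument in (2) even makes the paper's appeal to the \textbf{C}-hyperideal property unnecessary.
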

\begin{proof}
(1) Let $xoy \subseteq \phi^{-1}(I_2)$ with $x,y \in R_1$ such that $x \notin r(0_{R_1})$. Thus we have $\phi(xoy)=\phi(x)o\phi(y) \subseteq I_2$. Since $\phi$ is a monomorphism , we conclude that $\phi(x)$ is not in $r(0_{R_2})$. It means that $\phi(y) \in I_2$, since $I_2$ is an n- hyperideal of $R_2$. Hence $y \in \phi^{-1}(I_2)$. Therefore $\phi^{-1}(I_2)$ is an n-hyperideal of $R_1$.\\
(2) Let $x_2oy_2 \subseteq \phi(I_1)$ with $x_2,y_2 \in R_2$ such that $x_2 \notin r(0_{R_2})$. Since $\phi$ is an epimorphism, there exist $x_1,y_1 \in R_1$ such that $\phi(x_1)=x_2, \phi(y_1)=y_2$ and so $\phi(x_1oy_1)=x_2oy_2 \subseteq \phi(I_1)$. Now take any $u \in x_1 o y_1 $. Then we get $\phi(u) \in \phi(x_1oy_1) \subseteq \phi(I_1)$ and so $\phi(u) = \phi(w)$ for some $w \in I_1$. This implies that $\phi(u-w) = 0 \in (0)$, that is, $u-w \in Ker(\phi) \subseteq I_1$ and so $u \in I_1$. Since $I_1$ is a ${\bf C}$-hyperideal of $R_1$,  we conclude that $x_1oy_1 \subseteq I_1$. Since $I_1$ is an n-hyperideal of $R_1$ and $x_1 \notin r(0_{R_1})$, then we have $y_1 \in I_1$. It means that $\phi(y_1)=y_2 \in \phi(I_1)$.Consequently, $\phi(I_1)$ is an n-hyperideal of $R_2$. 
\end{proof}

\begin{Cor}
Let $I, J$ be proper hyperideals of $R$ such that $J \subseteq I$ and $I$ be an n-hyperideal of $R$. Then $I / J$ is an n-hyperideal of $R / J$.
\end{Cor}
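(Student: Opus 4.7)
The plan is to deduce this corollary immediately from Theorem \ref{32}(2) applied to the canonical projection $\pi : R \longrightarrow R/J$.

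First, I would briefly recall the standard construction of the quotient hyperring $R/J$: its underlying additive group is the usual quotient $R/J$ of abelian groups (cosets $r+J$), and the hyperoperation is given by $(r_1+J)\,o\,(r_2+J)=\{s+J : s\in r_1\,o\,r_2\}$. With this structure, $R/J$ becomes a commutative multiplicative hyperring and the projection $\pi(r)=r+J$ is a good epimorphism, with $\mathrm{Ker}(\pi)=J$.

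Next, I would invoke the hypothesis $J\subseteq I$ to observe that $I$ is an n-hyperideal of $R$ containing $\mathrm{Ker}(\pi)$. Therefore Theorem \ref{32}(2) applies directly to $\phi=\pi$ and $I_1=I$, and produces that $\pi(I)$ is an n-hyperideal of $R/J$. Identifying $\pi(I)=\{x+J : x\in I\}$ with the usual hyperideal $I/J$ of $R/J$ gives the desired conclusion. Since $I$ is proper in $R$, the coset $1+J$ does not belong to $I/J$, so $I/J$ is indeed a proper hyperideal of $R/J$, ensuring the statement is nontrivial.

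There is no real obstacle here; the only thing to check is that $\pi$ meets the hypotheses of Theorem \ref{32}(2), namely that it is a good epimorphism (immediate from the definition of the quotient) and that its kernel lies inside $I$ (immediate from $J\subseteq I$). Everything else is then subsumed in the already-proven part (2) of Theorem \ref{32}.
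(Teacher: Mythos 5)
Your proof is correct and follows essentially the same route as the paper: both define the canonical projection $\pi(r)=r+J$, note it is a good epimorphism with $\mathrm{Ker}(\pi)=J\subseteq I$, and apply Theorem \ref{32}. You correctly invoke part (2) (the epimorphism case), whereas the paper's text cites part (1), which is evidently a typographical slip.
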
 
\begin{proof}
Define $f:R \longrightarrow R/J$ by $f(r)=r+J$. Clearly, $f$ is a good epimorphism. Since $Ker (f)=J \subseteq I$ and $I$ is an n-hyperideal of $R$ , then the claim follows from Theorem \ref{32} (1).
\end{proof}
\begin{Cor}\label{33}
Let $I, J$ be proper hyperideals of $R$ such that $J \subseteq I$. If $I/J$ is an n-hyperideal of $R/J$ with $J \subseteq r(0)$, then $I$ is an n-hyperideal of $R$. 
\end{Cor}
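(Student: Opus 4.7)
The plan is to pull back the $n$-hyperideal condition from $R/J$ through the canonical projection $\pi:R\longrightarrow R/J$, $\pi(r)=r+J$, which is a good epimorphism with $\mathrm{Ker}(\pi)=J$. So suppose $xoy\subseteq I$ for some $x,y\in R$ with $x\notin r(0)$; the goal is to show $y\in I$. Applying $\pi$ and using that it is a good homomorphism gives $\pi(x)o\pi(y)=\pi(xoy)\subseteq \pi(I)=I/J$.

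The heart of the argument, and the main obstacle, is to verify that $\pi(x)\notin r(0_{R/J})$, so that the hypothesis that $I/J$ is an $n$-hyperideal of $R/J$ can be invoked to deduce $\pi(y)\in I/J$ and hence $y\in I$. Suppose for contradiction that $\pi(x)\in r(0_{R/J})$. Since every hyperideal is assumed to be a $\mathbf{C}$-hyperideal, the radical description from Proposition 3.2 of \cite{ref1} applies to the zero hyperideal of $R/J$: there exists $n\in\mathbb{N}$ with $\pi(x)^n\subseteq\{0+J\}$. Writing $\pi(x)^n=\pi(x^n)$ this translates to $x^n\subseteq J$, and the hypothesis $J\subseteq r(0)$ then yields $x^n\subseteq r(0)$.

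Now I need to conclude from $x^n\subseteq r(0)$ that $x\in r(0)$, i.e.\ that $r(0)$ is a radical hyperideal. This follows from the fact that $r(0)$ is the intersection of all prime hyperideals of $R$, and any prime hyperideal containing $r(0)$ is itself one of the primes whose intersection defines $r(0)$; hence $r(r(0))=r(0)$. Applying the $\mathbf{C}$-hyperideal characterization of the radical to $r(0)$ turns $x^n\subseteq r(0)$ into $x\in r(r(0))=r(0)$, contradicting $x\notin r(0)$.

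Having established $\pi(x)\notin r(0_{R/J})$, the $n$-hyperideal property of $I/J$ applied to $\pi(x)o\pi(y)\subseteq I/J$ gives $\pi(y)\in I/J$, that is, $y+J\in I/J$, so $y\in I$. This shows $I$ is an $n$-hyperideal of $R$.
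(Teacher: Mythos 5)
Your proposal is correct and follows the same route as the paper: push $xoy\subseteq I$ down to $(x+J)o(y+J)\subseteq I/J$ via the canonical projection and invoke the $n$-hyperideal property of $I/J$. The only difference is that the paper simply declares $x+J\notin r(0_{R/J})$ to be ``clear,'' whereas you actually prove it --- using the $\mathbf{C}$-hyperideal description of the radical to get $x^n\subseteq J\subseteq r(0)$ and then $x\in r(r(0))=r(0)$, a contradiction --- which is exactly the point where the hypothesis $J\subseteq r(0)$ is needed, so your write-up is the more complete one.
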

\begin{proof}
Let $xoy \subseteq I$ for some $x,y \in R$ such that $x \notin r(0)$. It is clear that $(x+J)o(y+J)=\{z+J : z \in xoy \} \subseteq I/J$ and $x+J \notin r(0_{R/I})$. Thus $y+J \in I/J$, Since $I/J$ is an n-hyperideal of $R/J$. Therefore $y \in I$. It means that $I$ is an n-hyperideal of $R$. 
\end{proof}
\begin{Cor}
Let $I, J$ be proper hyperideals of $R$ such that $J \subseteq I$. If $J$ is an n-hyperideal of $R$ and $I/J$ is an n-hyperideal of $R/J$, then $I$ is an n-hyperideal of $R$. 
\end{Cor}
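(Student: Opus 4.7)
The plan is to reduce this statement immediately to the previous Corollary \ref{33}, which already gives the conclusion under the extra hypothesis $J \subseteq r(0)$. So the entire task is to verify that hypothesis from the assumption that $J$ itself is an n-hyperideal.

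First, I would apply Theorem \ref{21} to the hyperideal $J$: since $J$ is an n-hyperideal of $R$, that theorem gives $J \subseteq r(0)$ with no further work. This is the only content that needs to be extracted from the new hypothesis.

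Second, with $J \subseteq r(0)$ now established and $I/J$ assumed to be an n-hyperideal of $R/J$, the hypotheses of Corollary \ref{33} are met verbatim. Applying that corollary yields that $I$ is an n-hyperideal of $R$, finishing the proof.

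There is essentially no obstacle here; the statement is a packaging of Theorem \ref{21} into Corollary \ref{33}. The only point to be careful about in the write-up is to make sure we invoke \emph{Theorem \ref{21}} for the ambient hyperring $R$ (so that $r(0)$ refers to $r(0_R)$, matching the hypothesis of Corollary \ref{33}), rather than any radical computed inside the quotient.
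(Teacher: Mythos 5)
Your proposal is correct and is exactly the paper's argument: the paper's proof reads ``It is obvious by Theorem \ref{21} and Corollary \ref{33},'' which is precisely your reduction (Theorem \ref{21} applied to $J$ gives $J \subseteq r(0)$, and then Corollary \ref{33} finishes). You have simply spelled out the details the paper leaves implicit.
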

\begin{proof}
It is obvious by Theorem \ref{21} and Corollary \ref{33}.
\end{proof}
\begin{Thm} \label{11126} 
Let $R$ be a multiplicative hyperring with scalar
identity 1 and $I$ be a hyperideal of $R$. If $M_n(I)$ is an n-hyperideal of
$M_n(R)$, then $I$ is an n-hyperideal of $R$. 
\end{Thm}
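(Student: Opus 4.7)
The plan is to establish the implication by lifting the given data into the matrix hyperring $M_n(R)$. Suppose $x o y \subseteq I$ for some $x,y \in R$ with $x \notin r(0_R)$; the aim is to conclude $y \in I$. Since $M_n(I)$ is assumed to be an n-hyperideal, it is in particular proper, so $I$ itself is proper, and the only thing left to check is the n-hyperideal axiom for $I$.

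First I would construct the hypermatrices $X, Y \in M_n(R)$ having $x$ and $y$ respectively in position $(1,1)$ and $0$ in every other position. Using the scalar-identity hypothesis on $R$ together with the identities $0 o r = \{0\}$ that are standard in a multiplicative hyperring, a direct computation of the hyperproduct $X o Y$ as a subset of $M_n(R)$ shows that every matrix appearing in $X o Y$ has its $(1,1)$ entry in $x o y \subseteq I$ and every other entry equal to $0$. Hence $X o Y \subseteq M_n(I)$.

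Next I would verify that $X \notin r(0_{M_n(R)})$. Because all hyperideals in this paper are assumed to be $\mathbf{C}$-hyperideals, the prime radical of $0_{M_n(R)}$ coincides with the set of hypernilpotent matrices. An induction on $k$ shows that every matrix in $X^k$ has its $(1,1)$ entry in $x^k$ and all other entries equal to $0$. If $X^k \subseteq \{0\}$ held for some $k$, then $x^k = \{0\}$, making $x$ hypernilpotent and therefore $x \in r(0_R)$, contrary to the hypothesis. Thus $X \notin r(0_{M_n(R)})$. Applying the n-hyperideal property of $M_n(I)$ to $X o Y \subseteq M_n(I)$ with $X \notin r(0_{M_n(R)})$ yields $Y \in M_n(I)$; reading off the $(1,1)$ entry of $Y$ gives $y \in I$, as required.

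The main obstacle I anticipate is making the matrix hyperproduct computation precise, so that the only potentially nonzero entry of any element of $X o Y$ and of any $X^k$ really sits in position $(1,1)$, with the expected $R$-hyperproduct as its value. This relies on the scalar-identity assumption on $R$ (to evaluate the off-diagonal sums cleanly) and on the absorbing behavior of $0$ in a multiplicative hyperring; once those are handled, passing from $M_n(I)$'s n-hyperideal property to $I$'s is immediate.
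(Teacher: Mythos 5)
Your proposal follows essentially the same route as the paper: embed $x$ and $y$ as the $(1,1)$ entries of hypermatrices $X$, $Y$ with zeros elsewhere, check $X o Y \subseteq M_n(I)$, observe $X \notin r(0_{M_n(R)})$, and apply the n-hyperideal property of $M_n(I)$ to read off $y \in I$. In fact you supply a justification (the induction showing every matrix in $X^k$ has its $(1,1)$ entry in $x^k$ and zeros elsewhere, so nilpotence of $X$ would force nilpotence of $x$, using the standing \textbf{C}-hyperideal convention) for the step the paper merely asserts, namely that the $x$-matrix does not lie in $r(0_{M_n(R)})$.
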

\begin{proof}
Suppose that for $x,y \in R$ , $xoy \subseteq I$ such that $x \notin r(0_R)$. Then 
\[ \begin{pmatrix}
xoy & 0 & \cdots & 0\\
0 & 0 & \cdots & 0\\
\vdots& \vdots & \ddots \vdots\\
0 & 0 & \cdots & 0
\end{pmatrix}
\subseteq M_n(I) \]
It is clear that 
\[ \begin{pmatrix}
xoy & 0 & \cdots & 0\\
0 & 0 & \cdots & 0\\
\vdots& \vdots & \ddots \vdots\\
0 & 0 & \cdots & 0
\end{pmatrix}
=
\begin{pmatrix}
x & 0 & \cdots & 0\\
0 & 0 & \cdots & 0\\
\vdots& \vdots & \ddots \vdots\\
0 & 0 & \cdots & 0
\end{pmatrix}
\begin{pmatrix}
y & 0 & \cdots & 0\\
0 & 0 & \cdots & 0\\
\vdots& \vdots & \ddots \vdots\\
0 & 0 & \cdots & 0
\end{pmatrix}
.\]
Since $M_n(I)$ is an n-hyperideal of $M_n(R)$ and 
\[ \begin{pmatrix}
x & 0 & \cdots & 0\\
0 & 0 & \cdots & 0\\
\vdots& \vdots & \ddots \vdots\\
0 & 0 & \cdots & 0
\end{pmatrix} \notin r(0_{M_n(R)})\]
then we have

\[\begin{pmatrix}
y & 0 & \cdots & 0\\
0 & 0 & \cdots & 0\\
\vdots& \vdots & \ddots \vdots\\
0 & 0 & \cdots & 0
\end{pmatrix}
\in M_n(I).\]
It follows that $ y \in I$. Therefore $I$ is an n-hyperideal of $R$.

\end{proof}
\begin{Thm}
Let $T$ be a subhyperring of $R$. If I is an n-hyperideal of $R$ such that $T \nsubseteq I$, then $I \cap T$ is an n-hyperideal of $T$. 
\end{Thm}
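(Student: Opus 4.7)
The plan is to reduce the $n$-hyperideal condition on $I\cap T$ (inside $T$) to the $n$-hyperideal condition on $I$ (inside $R$), after checking two preliminary points: that $I\cap T$ is a proper hyperideal of $T$, and that the notions of ``not in $r(0)$'' for $T$ and for $R$ agree on elements of $T$.

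First I would check the easy structural facts. The intersection $I\cap T$ is closed under subtraction because both $I$ and $T$ are, and if $x\in I\cap T$ and $r\in T$, then $rox\subseteq I$ (since $I$ is a hyperideal of $R$ and $r\in R$) and $rox\subseteq T$ (since $T$ is a subhyperring), so $rox\subseteq I\cap T$. The hypothesis $T\nsubseteq I$ produces some $t\in T\setminus I$, which lies in $T\setminus(I\cap T)$; hence $I\cap T$ is proper in $T$.

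The main step is to verify the $n$-hyperideal axiom. Suppose $x,y\in T$ with $xoy\subseteq I\cap T$ and $x\notin r_T(0)$, where $r_T(0)$ denotes the prime radical of $\{0\}$ computed inside $T$. The key observation is that, because all hyperideals are assumed to be \textbf{C}-hyperideals, the prime radical of $\{0\}$ coincides with the set of nilpotent elements, both in $R$ and in $T$. Since the hyperoperation on $T$ is inherited from $R$, an element $x\in T$ is nilpotent in $T$ if and only if it is nilpotent in $R$; equivalently $r_T(0)=r_R(0)\cap T$. Therefore $x\notin r_T(0)$ forces $x\notin r_R(0)$. Combined with $xoy\subseteq I\cap T\subseteq I$ and the fact that $I$ is an $n$-hyperideal of $R$, this yields $y\in I$, and since $y\in T$ we get $y\in I\cap T$, as required.

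The only delicate point is the identification $r_T(0)=r_R(0)\cap T$; without the \textbf{C}-hyperideal hypothesis one would have to argue via prime hyperideals of $T$ versus those of $R$, which is less transparent. Here, however, the paper's blanket \textbf{C}-hyperideal assumption collapses prime radicals to nilradicals, so the step is immediate. Everything else is a direct transfer of the $n$-hyperideal property from $R$ down to $T$.
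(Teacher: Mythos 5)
Your proof is correct and is essentially the paper's argument: the paper simply applies its Theorem \ref{32}(1) to the inclusion monomorphism $j\colon T\to R$ (noting $j^{-1}(I)=I\cap T$), and unfolding that lemma for $j$ gives exactly your direct verification. Your explicit justification of the key identification $r_T(0)=r_R(0)\cap T$ via the \textbf{C}-hyperideal/nilradical reduction is a point the paper leaves implicit, but it is the same underlying step as the paper's claim that a monomorphism sends non-nilpotents to non-nilpotents.
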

\begin{proof}
Define $j:T \longrightarrow R$ by $j(t)=t$. It is clear that $j^{-1}(I)=I \cap T$. Thus $I \cap T$ is an n-hyperideal of $T$, by \ref{32} (1). 
\end{proof}

\begin{Thm}
Let $R_1$ and $R_2$ be two multiplicative hyperrings. If $I=I_1 \times I_2$ is an n-hyperideal of $R=R_1 \times R_2$, then $I=R$.
\end{Thm}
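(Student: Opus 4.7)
My plan is to produce a non-nilpotent element $x\in R=R_1\times R_2$ such that $xoy\subseteq I=I_1\times I_2$ for all $y$ of a sufficiently rich form, and then let the n-hyperideal property force those $y$'s into $I$. The natural candidates are $x=(1_{R_1},0_{R_2})$, together with the mirror element $x'=(0_{R_1},1_{R_2})$ for the other factor; these should witness $I_2=R_2$ and $I_1=R_1$ respectively.

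To verify $x\notin r(0_R)$, I use that the hyperoperation on $R_1\times R_2$ is defined componentwise, so every $n$-th hyperpower satisfies $x^n = 1_{R_1}^n\times 0_{R_2}^n$. The identity convention $a\in aoe$ from Section~2 gives $1_{R_1}\in 1_{R_1} o\,1_{R_1}$, and by induction $1_{R_1}\in 1_{R_1}^n$ for every $n\in\mathbb{N}$. Since $R_1$ is nontrivial, $1_{R_1}\neq 0_{R_1}$, so $x^n$ always contains an element with nonzero first coordinate; in particular $x^n\nsubseteq\{(0,0)\}$, which shows $x\notin r(0_R)$. Now fix any $r_2\in R_2$ and set $y=(0_{R_1},r_2)$. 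Because $I_1$ and $I_2$ are hyperideals and $0\in I_1\cap I_2$, the absorption axiom gives $1_{R_1}o\,0_{R_1}\subseteq I_1$ and $0_{R_2}o\,r_2\subseteq I_2$, so that $xoy\subseteq I_1\times I_2=I$. Applying the defining property of n-hyperideals (equivalently, Theorem \ref{25}(2)), I conclude $y\in I$, hence $r_2\in I_2$. Since $r_2$ was arbitrary, $I_2=R_2$; the symmetric argument with $x'$ gives $I_1=R_1$, and therefore $I=R$.

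The step I expect to be the most delicate is the nilpotence verification $x\notin r(0_R)$: it rests on the componentwise structure of the product hyperoperation and on reading ``identity'' in the weak sense $a\in aoe$ from the preliminaries rather than the scalar identity sense. Once this is in place, the remainder is a direct unpacking of the n-hyperideal definition together with the absorption properties of $I_1$ and $I_2$, so no further subtleties should arise.
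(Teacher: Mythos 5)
Your proof is correct and takes essentially the same route as the paper: both exploit the non-nilpotent element $(1_{R_1},0_{R_2})$ (and its mirror) together with the componentwise product to force elements of the form $(0_{R_1},r_2)$ into $I$. The only differences are cosmetic --- the paper applies the n-hyperideal property just to the pair $(1_{R_1},0_{R_2})$, $(0_{R_1},1_{R_2})$ and concludes $I=R$ from there, whereas you quantify over all $r_2$ and also spell out the verification that $(1_{R_1},0_{R_2})\notin r(0_R)$, which the paper dismisses as clear.
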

\begin{proof}
It is clear that $(1_{R_1},0_{R_2}) o(0_{R_1},1_{R_2}) \subseteq I$ but none of the elements are in $r(0_{R_1 \times R_2})$. Since $I$ is an n-hyperideal of $R$, then $(1_{R_1},0_{R_2})$ and $(0_{R_1},1_{R_2})$ are in $I$. Thus $I=R$.
\end{proof}
Let $(R, +, o)$ be a hyperring. We define the relation $\gamma$ on $R$ as follows:\\ 
$a \gamma b$ if and only if $\{a,b\} \subseteq U$ where $U$ is a finite sum of finite products of
elements of R, i.e.,$a \gamma b$ if and only if there exist $ z_1, ... , z_n \in R$ such that $\{a, b\} \subseteq \sum_{j \in J} \prod_{i \in I_j} z_i; \ \ I_j, J \subseteq \{1,... , n\}$

We denote the transitive closure of $\gamma$ by $\gamma ^{\ast}$. The relation $\gamma ^{\ast}$ is the smallest equivalence relation on a multiplicative hyperring $(R, +, o)$ such that the
quotient $R/\gamma ^{\ast}$, the set of all equivalence classes, is a fundamental ring. Let $\mathfrak{U}$
be the set of all finite sums of products of elements of R we can rewrite the
definition of $\gamma ^{\ast}$ on $R$ as follows:\\
$a\gamma ^{\ast}b$ if and only if there exist $ z_1, ... , z_n \in R$ with $z_1 = a, z_{n+1 }= b$ and $u_1, ... , u_n \in \mathfrak{U}$ such that
$\{z_i, z_{i+1}\} \subseteq u_i$ for $i \in \{1, ... , n\}$.
Suppose that $\gamma ^{\ast}(a)$ is the equivalence class containing $a \in R$. Then, both
the sum $\oplus$ and the product $\odot$ in $R/\gamma ^{\ast}$ are defined as follows:$\gamma^{\ast}(a) \oplus \gamma ^{\ast}(b)=\gamma ^{\ast}(c)$ for all $c \in \gamma^{\ast}(a) + \gamma ^{\ast}(b)$ and $\gamma ^{\ast}(a) \odot \gamma ^{\ast}(b)=\gamma ^{\ast}(d)$ for all $d \in  \gamma ^{\ast}(a) o \gamma ^{\ast}(b)$
Then $R /\gamma ^{\ast}$ is a ring, which is called a fundamental ring of $R$ (see also \cite{sorc4}).

\begin{Thm}
Let $R$ be a multiplicative hyperring with scalar identity $1$. Then the hyperideal $I$ of $R$ is n-hyperideal if and only if $I/\gamma ^{\ast}$ be an n-ideal of $R/\gamma ^{\ast}$. 
\end{Thm}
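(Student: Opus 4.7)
The plan is to transfer the $n$-hyperideal property across the canonical projection $\pi:R\longrightarrow R/\gamma^{\ast}$, $a\mapsto\gamma^{\ast}(a)$, which is a good epimorphism because $R$ has a scalar identity. The argument proceeds by two direct implications, both of which reduce to a preliminary correspondence between the nilradical $r(0_R)$ and the nilpotents of the fundamental ring $R/\gamma^{\ast}$.

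\medskip

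First I would establish the key lemma: $a\in r(0_R)$ if and only if $\gamma^{\ast}(a)$ is nilpotent in $R/\gamma^{\ast}$. The forward direction is formal: since every hyperideal is a ${\bf C}$-hyperideal, $a\in r(0_R)$ means $a^n\subseteq\{0\}$ for some $n$, and applying $\pi$ yields $\gamma^{\ast}(a)^n = \gamma^{\ast}(0)=0_{R/\gamma^{\ast}}$. The reverse direction requires unwinding the definition of $\gamma^{\ast}$: if $\gamma^{\ast}(a)^n = 0$, then for any $d\in a^n$ one has $d\,\gamma^{\ast}\,0$, and using the scalar identity together with the explicit description of $\gamma^{\ast}$ through sums of products of elements of $R$ one deduces that a suitable hyperpower of $a$ is contained in $\{0\}$.

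\medskip

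For the forward direction of the theorem, assume $I$ is an $n$-hyperideal of $R$, and let $\gamma^{\ast}(a)\odot\gamma^{\ast}(b)\in I/\gamma^{\ast}$ with $\gamma^{\ast}(a)$ not nilpotent. The lemma gives $a\notin r(0_R)$. For any $d\in aob$ we have $\gamma^{\ast}(d)=\gamma^{\ast}(a)\odot\gamma^{\ast}(b)\in I/\gamma^{\ast}$, which means that $d$ is $\gamma^{\ast}$-equivalent to some element of $I$; combined with the ${\bf C}$-hyperideal property of $I$ this forces $aob\cap I\neq\varnothing$, hence $aob\subseteq I$. The $n$-hyperideal hypothesis then delivers $b\in I$, so $\gamma^{\ast}(b)\in I/\gamma^{\ast}$.

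\medskip

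Conversely, assume $I/\gamma^{\ast}$ is an $n$-ideal of $R/\gamma^{\ast}$. If $aob\subseteq I$ in $R$ with $a\notin r(0_R)$, take any $d\in aob\subseteq I$; then $\gamma^{\ast}(a)\odot\gamma^{\ast}(b)=\gamma^{\ast}(d)\in I/\gamma^{\ast}$ and by the lemma $\gamma^{\ast}(a)$ is non-nilpotent, so the $n$-ideal property yields $\gamma^{\ast}(b)\in I/\gamma^{\ast}$, whence $b\in I$.

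\medskip

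The main obstacle will be the preliminary lemma, and specifically its reverse direction: passing from ``some element of the hyperproduct $a^n$ is $\gamma^{\ast}$-equivalent to $0$'' back to the statement that an actual hyperpower of $a$ is contained in $\{0\}$. This is the only step where the scalar identity is used in an essential way, and it relies on the concrete chain-of-sums-of-products form of $\gamma^{\ast}$. A secondary but recurring technical point, entering in both directions, is the identification of $\gamma^{\ast}$-classes lying in $I/\gamma^{\ast}$ with genuine elements of $I$; here the standing ${\bf C}$-hyperideal assumption of the paper is what ensures that membership in $I/\gamma^{\ast}$ pulls back to membership in $I$.
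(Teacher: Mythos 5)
Your route is the same as the paper's: transfer the definition through the canonical projection and reduce both implications to (a) a correspondence between $r(0_R)$ and the nilpotents of $R/\gamma^{\ast}$, and (b) the fact that $\gamma^{\ast}(d)\in I/\gamma^{\ast}$ pulls back to $d\in I$. The paper asserts both silently; you at least isolate them, and the half of (a) that the forward implication needs ($a\in r(0_R)$ implies $\gamma^{\ast}(a)$ nilpotent, via $a^n\subseteq\{0\}$) is indeed routine.

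The genuine gap is the reverse half of your key lemma, which you flag as the ``main obstacle'' but never prove, and on which your entire ($\Leftarrow$) argument rests. It is not a technicality to be unwound from the definition of $\gamma^{\ast}$: it is false in general. In the hyperring $\mathbb{Z}_A$ of Example \ref{exa} (with $A=\{5,7\}$), the set $1o1+(-1)o1=\{-2,0,2\}$ is a finite sum of finite products containing both $0$ and $2$, so $2\,\gamma\,0$ and hence $\gamma^{\ast}(2)=\gamma^{\ast}(0)=0_{R/\gamma^{\ast}}$ is nilpotent; yet $\mathbb{Z}_A$ is an integral hyperdomain, so $2\notin r(0)=\{0\}$. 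Thus nilpotence of $\gamma^{\ast}(a)$ cannot yield a hyperpower of $a$ contained in $\{0\}$, scalar identity notwithstanding, and the deduction ``$a\notin r(0_R)$, therefore $\gamma^{\ast}(a)$ is not nilpotent'' fails exactly where you would invoke it. (To be fair, the paper's own proof of ($\Leftarrow$) makes the same unjustified assertion, so your plan is more honest about the difficulty but does not overcome it.) A secondary unresolved point is (b): $\gamma^{\ast}(d)\in I/\gamma^{\ast}$ only says $d$ is $\gamma^{\ast}$-equivalent to an element of $I$ through a finite \emph{sum} of products, while the ${\bf C}$-hyperideal hypothesis controls only single products, so ``this forces $aob\cap I\neq\varnothing$'' also needs an argument that neither you nor the paper supplies.
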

\begin{proof}
($\Longrightarrow$) Let for $x, y \in R/\gamma ^{\ast}, \ x \odot y \in I/\gamma ^{\ast}$ such that $x \notin r(0_{R/\gamma ^{\ast}})$. Thus, there exist $a, b\in R$ such
that $x =\gamma^{\ast}(a), y = \gamma^{\ast}(b)$ and $x \odot y = \gamma^{\ast}(a) \odot \gamma^{\ast}(b) =\gamma^{\ast}(a o b)$. So, $\gamma^{\ast} (a) \odot \gamma^{\ast}(b)=
\gamma^{\ast}(ao b) \in I/\gamma^{\ast}$, then $a ob \subseteq I$. Since $I$ is an n-hyperideal and $ a \notin r(0_R)$, then $b \in I$. Hencey $y= \gamma^{\ast}(b) \in I/\gamma ^{\ast}$.
Thus $I/\gamma ^{\ast}$ is an n-ideal of $R/\gamma ^{\ast}$.\\
($\Longleftarrow$) Suppose that $aob \subseteq I$ for $a, b \in R$ such that $a \notin r(0_R)$, then $\gamma^{\ast}(a), \gamma^{\ast}(b)\in R/\gamma^{\ast} $ and
$\gamma^{\ast}(a) \odot\gamma^{\ast}(b) = \gamma^{\ast}(a o b) \in I/\gamma^{\ast}$. Since $I/\gamma ^{\ast}$ is an n-ideal of $R/\gamma ^{\ast}$ and $\gamma^{\ast}(a) \notin r(0_{R/\gamma ^{\ast}})$, then we have $\gamma^{\ast}(b) \in I/\gamma^{\ast}$. It means that $b \in I$. Hence $I$ is an n-hyperideal of $R$.
\end{proof}

\end{document}